\numberwithin{equation}{section}  
\newtheorem{punkt}{}[section]
\theoremstyle{plain}
\newtheorem{corollary}[punkt]{Corollary}
\newtheorem{lemma}[punkt]{Lemma}
\newtheorem{theorem}[punkt]{Theorem}
\newcommand{\eins}{\leavevmode\hbox{\small1\kern-3.8pt\normalsize1}}
\theoremstyle{definition}
\newtheorem{remark}[punkt]{Remark}
\newtheorem{example}[punkt]{Example}
\newtheorem{examples}[punkt]{Examples}
\newtheorem{definition}[punkt]{Definition}
\theoremstyle{plain}
\newtheorem*{corollary*}{Corollary}
\newtheorem*{lemma*}{Lemma}
\newtheorem*{proposition*}{Proposition}
\newtheorem*{theorem*}{Theorem}
\theoremstyle{definition}
\newtheorem*{remark*}{Remark}
\newtheorem*{remarks*}{Remarks}
\newtheorem*{example*}{Example}
\newtheorem*{examples*}{Examples}
\newtheorem*{problem*}{Problem}
\newtheorem*{problems*}{Problems}
\newtheorem*{question*}{Question}
\newtheorem*{questions*}{Questions}
\newtheorem*{definition*}{Definition}
\newtheorem*{conjecture*}{Conjecture}
\newtheorem*{assumption*}{Assumption}
\newtheorem*{assumptions*}{Assumptions}
\newtheorem*{construction*}{Construction}
\def\mycmplx{\mathbb{C}}
\def\mynat{\mathbb{N}}
\def\myreal{\mathbb{R}}
\def\U{\text{U}}
\def\mys{\mathcal{S}}
\def\ee{\mathbb{E}}		% the default expectation
\def\tr{\qopname\relax{no}{tr}}
\def\eg{e.g.\@\xspace}
\def\ie{i.e.\@\xspace}
\def\GL{\qopname\relax{no}{GL}}
\def\fourier{\mathcal{F}}
\def\mellin{\mathcal{M}}
\def\hankel{\mathcal{H}}
\begin{document}

\renewcommand{\thefootnote}{}

\title[Polynomial Ensembles \& P\'olya Frequency Functions]{Polynomial Ensembles and P\'olya Frequency Functions}
\author[Yanik-Pascal F\"orster, Mario Kieburg and Holger K\"osters]{Yanik-Pascal F\"orster$^{1,2,6}$, Mario Kieburg$^{1,3,7}$ and Holger K\"osters$^{4,5,8,*}$}
% \address{$^1$Department of Physics\\
% Bielefeld University\\
% Postfach 100131,
% D-33501 Bielefeld, Germany}
% \address{$^2$ Faculty of Natural \& Mathematical Sciences \\ King's College London \\ Strand \\ London \\ WC2R 2LS \\ United Kingdom}
% \address{$^3$University of Melbourne \\ School of Mathematics and Statistics \\
% 813 Swanton Street \\ Parkville, Melbourne VIC 3010 \\ Australia}
% \address{$^4$Department of Mathematics \\  Bielefeld University \\
% Postfach 100131 \\ D-33501 Bielefeld \\ Germany}
% \address{$^5$Institute of Mathematics \\ University of Rostock \\
% D-18051 Rostock, Germany}
%
%\email{$^\blacktriangle$ yanik-pascal.foerster@kcl.ac.uk}
%\email{$^*$ mkieburg@physik.uni-bielefeld.de\,,\ m.kieburg@unimelb.edu.au}
%\email{$^\dagger$ hkoesters@math.uni-bielefeld.de\,,\  holger.koesters@uni-rostock.de}

\date{\today}

\begin{abstract}
We study several kinds of polynomial ensembles of derivative type which we 
propose to call P\'olya ensembles. These ensembles are defined on the spaces 
of complex square, complex rectangular, Hermitian, Hermitian anti-symmetric 
and Hermitian anti-self-dual matrices, and they have nice closure properties 
under the multiplicative convolution for the first class and under the 
additive convolution for the other classes. The cases of complex square 
matrices and Hermitian matrices were already studied in former works. One of 
our goals is to unify and generalize the ideas to the other classes of 
matrices. Here we consider convolutions within the same class of P\'olya 
ensembles as well as convolutions with the more general class of polynomial 
ensembles. Moreover, we derive some general identities for group integrals 
similar to the Harish-Chandra-Itzykson-Zuber integral, and we relate P\'olya 
ensembles to P\'olya frequency functions. For illustration we give a number 
of explicit examples for our results.
\end{abstract}

\subjclass[2010]{60B20}

\keywords{probability measures on matrix spaces;
sums and products of independent random matrices;
polynomial ensembles; 
additive convolution; multiplicative convolution; P\'olya frequency functions;
Fourier transform, Hankel transform, spherical transform.}

\maketitle{}

% \footnotetext{}
\footnotetext{*) Corresponding Author}
\footnotetext{1) Department of Physics, Bielefeld University, Postfach 100131,
 D-33501 Bielefeld, Germany}
\footnotetext{2) Faculty of Natural \& Mathematical Sciences, King's College London, Strand, London, WC2R~2LS, United Kingdom}
\footnotetext{3) University of Melbourne, School of Mathematics and Statistics,
 813 Swanton Street, Parkville, Melbourne VIC 3010, Australia}
\footnotetext{4) Department of Mathematics, Bielefeld University,
 Postfach 100131, D-33501 Bielefeld, Germany}
\footnotetext{5) Institute of Mathematics, University of Rostock,
 D-18051 Rostock, Germany}
\footnotetext{6) \emph{Email address}: yanik-pascal.foerster@kcl.ac.uk}
\footnotetext{7) \emph{Email address}: m.kieburg@unimelb.edu.au}
\footnotetext{8) \emph{Email address}: holger.koesters@uni-rostock.de}

% ****************************************************************************************************

\section{Introduction}
\label{sec:Introduction}

In 2012 it was observed that the spectral statistics of certain products of independent random matrices may be calculated explicitly at finite matrix dimension \cite{AB:2012,AKW:2013}. This breakthrough did not only lead to a new and fast develop\-ment on products of random matrices, see e.g.\@ \cite{AI:2015} for a review, but also on related topics like sums of random matrices~\cite{CKW:2015,KR:2016}. Originally the whole development started with products of independent Ginibre and Jacobi (truncated unitary) matrices due to their simplicity and their field of applications, e.g. for the local spectral statistics at finite and infinite matrix dimension, see~\cite{AB:2012,AKW:2013,AS:2013,AIK:2013,IK:2013,ARRS:2013,ABKN:2014,Forrester:2014,KS:2014,KZ:2014,LWZ:2014,KKS:2015}. While the results and proofs for these ensembles relied on the particular form of the ensembles, it soon became clear that there is some common structure in the background which leads to unified proofs as well as to further generalizations. For instance, in \cite{KS:2014}, the notion of a \emph{polynomial ensemble} was introduced, and it was shown that this yields a convenient framework to investigate the multiplication of a random matrix by an independent Ginibre or Jacobi (truncated unitary) matrix \cite{KS:2014,KKS:2015}. Motivated by these findings, the concept of a \emph{polynomial ensemble of derivative type} was introduced in \cite{KK:2016a,KK:2016b}. These matrix ensembles have several important properties. First, they are isotropic (also called bi-unitarily invariant or rotationally invariant), and second, they depend on a single one-point weight, cf. Definition~\ref{def:PolyaEns}\,(2) below. Third, they generalize the results about the multiplication by Ginibre and Jacobi matrices to a larger subclass of polynomial ensembles. Interestingly, this concept does not only lead to a unified perspective on many of the preceding results, but it also includes several further prominent examples of complex (non-Hermitian) random matrix ensembles.

Shortly after the appearance of these results, it was noted in~\cite{KR:2016} that the notion of a \emph{polynomial ensemble of derivative type} may also be adapted to investigate sums of independent Hermitian random matrices. Our main aim is to extend these~results to further symmetry classes of random matrices, namely complex rectangular matrices, Hermitian anti-symmetric matrices, and Hermitian anti-self-dual matrices. As we shall see, all these classes can be dealt with in a similar way by using the appropriate multivariate transforms from harmonic analysis. For the additive convolution on the space of Hermitian matrices and the multiplicative convolution on the space of complex square matrices, one needed the matrix-variate Fourier transform and the spherical transform, i.e.\@ the multivariate counterparts of the univariate Fourier and Mellin transform, respectively.
We will generalize this idea by identifying the appropriate matrix version of the Hankel transform \cite[Chapter 10.22(v)]{NIST}, which is intimately related to the addition of isotropically distributed random vectors, for the above-mentioned classes of matrices. This is our first major goal.

Note that all five kinds of convolutions considered in the present work are ensembles of Dyson index $\beta=2$ because they are related to either compact Lie algebras or complex matrices. The reason why one can easily deal with all five classes in a similar way is the knowledge of certain group integrals involved, namely the Harish-Chandra-Itzykson-Zuber integral~\cite{HC,IZ}, the Gelfand-Na\u{\i}mark integral~\cite{GelNai}, and the Berezin-Karpelevich integral~\cite{BK,GW}, which have essentially the same structure. 
For random matrix ensembles corresponding to the Dyson indices $\beta=1$ or $\beta=4$,
explicit results for such group integrals are only known for very small matrix dimensions but not in general.

We have two further goals. The second goal is to explore the connection of the class of polynomial ensembles of derivative type to the class of \emph{P\'olya frequency functions}~\cite{Polya:1913,Polya:1915}, a notion from classical analysis \cite{Karlin:1968}. There exists some related work in this direction in representation theory and ergodic theory \cite{Schoen:1951,Karlin:1968,Pickrell:1991,OV:1996,Faraut2006,Bufetov}, 
but it seems that this connection has not been explored yet from the viewpoint of random matrix theory, \ie as regards the associated singular value and eigenvalue distributions at finite matrix dimension. The relation between P\'olya frequency functions (with certain differentiability and integrability properties) and poly\-nomial ensembles of derivative type will be essentially bijective for the cases of complex square matrices (with multiplication) and Hermitian random matrices (with addition), cf. Theorem \ref{thm:rel-Polya}, while we will only establish a certain injective relationship for the~other matrix classes under consideration, cf. Theorem \ref{thm:rel-Polya.b}. Anyway, we suggest to call these \emph{polynomial ensembles of derivative type} by the shorter name \emph{P\'olya ensembles}.

Our third goal is to investigate some generalizations of the above-mentioned group integrals, see~also~\cite{GR:1989,O:2004,HO:2006}. We obtain a number of new and highly non-trivial examples of such identities which  arise naturally from our approach. Incidentally, these identities for group integrals also play a central role with respect to our second goal.

The present work is organized as follows. In Sec.~\ref{sec:mainresults} we introduce
the necessary notation and state our main results. In Sec.~\ref{sec:transforms},
we introduce the univariate and multi\-variate transforms from harmonic analysis, which are central to our approach. Sec.~\ref{sec:proofs} is devoted to the proofs of our main results. We discuss and summarize our findings in Sec.~\ref{sec:conclusio}.

\section{Notation and Main Results} 
\label{sec:mainresults}

\subsection{Matrix Spaces}
\label{sec:matrixspaces}

We introduce the relevant matrix spaces, the corresponding group actions
as well as the sets of probability measures invariant under these actions,
and we recall the relations of these probability measures 
to the induced probability measures on the eigenvalues or singular values.

We adapt the notation from our previous works~\cite{KK:2016a,KK:2016b} 
to our current purposes. 
Let ${\rm O}(n)$, ${\rm U}(n)$ and ${\rm USp}(2n)$ denote 
the classical groups of orthogonal, unitary and unitary symplectic matrices,
and let ${\rm o}(n)$, ${\rm u}(n)$ and ${\rm usp}(2n)$ denote
the associated classical Lie algebras.

We are interested in the following matrix spaces,
with $n \in \mynat$:
\begin{itemize}
\item[(1)] $G :={\rm GL}(n,\mathbb{C})$ is the group 
of invertible complex $n \times n$ matrices,
endowed with the action of the group
$\hat{K} := {\rm U}(n) \times {\rm U}(n)$
via $(k_1,k_2).g := k_1 g k_2^*$.
\item[(2)] $H_2 := \imath {\rm u}(n) = {\rm Herm}(n)$ is the linear space
of Hermitian $n \times n$ matrices,
endowed with the action of the group
$K_2 := {\rm U}(n)$
via $k.y := k y k^*$.
\item[(3a)] 
For fixed $\nu \in \mynat_0$,
${\rm Mat}_{\mathbb{C}}(n,n+\nu)$ is the linear space
of complex $n\times (n+\nu)$ rectangular matrices,
endowed with the action of the group
${\rm U}(n) \times {\rm U}(n+\nu)$
via $(k_1,k_2).y := k_1 y k_2^*$.
We identify an element $y \in {\rm Mat}_{\mathbb{C}}(n,n+\nu)$ with a chiral Hermitian matrix
\begin{equation}\label{embedding.1}\left[\begin{array}{cc} 0 & y \\ y^* & 0 \end{array}\right] \in {\rm Herm}(2n+\nu)\end{equation}
and an element $k = (k_1,k_2) \in {\rm U}(n) \times {\rm U}(n+\nu)$ with a block matrix
\begin{equation}\label{embedding.2}\left[\begin{array}{cc} k_1 & 0 \\ 0 & k_2 \end{array}\right] \in {\rm U}(2n +\nu)\,.\end{equation}
With these identifications, the group action
may be written as $k.y = k y k^*$.
Write $M_\nu$ and $\hat{K}_\nu$ for the spaces of matrices
in \eqref{embedding.1} and \eqref{embedding.2}.
\item[(3b)] $H_1 := \imath {\rm o}(2n)$ or $H_1 := \imath {\rm o}(2n+1)$
and $H_4 := \imath {\rm usp}(2n)$ are the linear~spaces
of Hermitian anti-symmetric and Hermitian anti-self-dual matrices,
en\-dowed with the action of the groups 
$K_1 := {\rm O}(2n)$ or $K_1 := {\rm O}(2n+1)$
and $K_4 := {\rm USp}(2n)$ via $k.y := k y k^*$.
\end{itemize}
Moreover, we also need the linear space
of real diagonal $n \times n$ matrices, 
$D \simeq \mathbb{R}^n$,
and the group of positive-definite diagonal $n \times n$ matrices,
$A \simeq \mathbb{R}_{+}^n$,
each endowed with the natural action of the symmetric group $\mathbb{S}$
of all permutations of order $n$ on the diagonal elements.

When it is possible to consider several of these cases simultaneously,
we write $M$ for the matrix space and $K$ for the associated group.
Note that the dependence on $n$ is implicit.
When the need arises to make it explicit, we write 
$M(n)$ instead of $M$, $K(n)$ instead of $K$, etc.

In (2) and (3b) we have used the index $\beta=1,2,4$ to underline 
the connection to the field of real, complex and quaternion numbers, respectively;
yet, it has to be distinguished from the level repulsion
that corresponds to Dyson index $2$ in all cases.
The multiplication by $\imath$ is convenient since it leads to matrices 
with real eigenvalues.
However, it is clear that our results can easily be translated into results 
for~real anti-symmetric, anti-Hermitian and anti-Hermitian 
anti-self-dual (or~quaternion anti-Hermitian) matrices, respectively.
We treat the case $\beta = 2$ separately because it turns out 
to be simpler than the other cases.

As reference measures on the matrices spaces $G$, $M_\nu$, $H_\beta$, $A$ and $D$,
we~use the flat Lebesgue measures on the linearly independent matrix entries,
typically denoted by $dg$ for $G$, by $dy$ for $M_\nu$ and $H_\beta$,
and by $da$ for $A$ and $D$.
For the groups $\hat{K}$, $\hat{K}_\nu$, $K_\beta$ as well as ${\rm U}(n)$,
we take the normalized Haar measures, always denoted by $d^* k$.
Occasionally, we also use the Haar measure $d^*g = dg/\det (gg^*)^n$
on $G = \GL(n,\mycmplx)$, with $g^*$ the Hermitian adjoint of $g$.

We always ignore sets of measure zero.
Thus, the spaces $G$ and $M_0$ are essentially the same.
However, we prefer to use different notations to reflect 
the different group operations on these spaces, 
namely matrix multiplication on $G$ and matrix addition on $M_0$.
Thus, when we speak about random matrices on $G$ or on $M_0$, 
we will be interested in their products and sums,
respectively.

\subsection{Matrix Densities and Spectral Densities}
\label{sec:matrixdensities}

Let $M$ and $K$ be as in Sub\-section \ref{sec:matrixspaces}.
As functions on $M$, we consider integrable functions invariant 
under the action of~$K$,
i.e.
\begin{equation}\label{group-inv-func}
L^{1,K}(M)=\left\{\left.f_{M}\in L^{1}(M)\right| f_{M}(k.m)=f_{M}(m)\ \forall m\in M,\,k\in K \right\} \,.
\end{equation}
In the following, we indicate the space on which the function (or density) 
is defined by a subscript, e.g.\@ $f_G,f_{M_\nu},f_{H_\beta},\hdots$
The invariance of the functions is called \emph{$K$-invariance} 
with respect to the respective group $K=\hat{K},\hat{K}_\nu,K_\beta,\mathbb{S}$. 
Note that this $K$-invariance 
amounts to \emph{bi-unitary invariance}
for the spaces ${\rm GL}(n,\mycmplx)$ and ${\rm Mat}_\mycmplx(n,n+\nu)$,
to \emph{conjugation invariance} for the spaces $H_\beta$,
and to \emph{permutation invariance} or \emph{symmetry}
for the spaces $D$ and $A$.
The subset of all probability densities in the set $L^{1,K}(M)$
will be denoted by $L_{\rm Prob}^{1,K}(M)$.

For each matrix space $M$ as in (1) -- (3), the $K$-invariant probability densities 
are in one-to-one correspondence with the induced \emph{spectral densities}, 
i.e.\@ the induced symmetric probability densities of the eigenvalues (for $M = H_2$) 
or the (non-zero) \linebreak squared singular values (for $M = G,M_\nu,H_1,H_4$). 
Let us describe these correspondences by bijective mappings $\mathcal{I}_M$ 
which associate to each $K$-invariant matrix density the induced spectral density. 
It turns out convenient to consider these mappings not only 
on the sets $L^{1,K}_{\text{Prob}}(M)$ of all $K$-invariant probability densities, 
but also, via linear extension, on the larger sets $L^{1,K}(M)$ 
of all $K$-invariant integrable functions.
Let the \emph{Vandermonde determinant} be defined by
\begin{equation}\label{Vandermonde}
\Delta_n(a)=\prod_{1\leq b<c\leq n}(a_c-a_b)=\det[a_l^{k-1}]_{l,k=1,\ldots,n},
\quad a_1,\hdots,a_n \in \myreal \,.
\end{equation}
Then the mappings $\mathcal{I}_M$ are as follows:
\begin{itemize}
 \item[(1)] 
Almost every matrix $y \in H_2$
has $n$ distinct eigenvalues $a_1,\hdots,a_n \in \myreal$,
and $\mathcal{I}_{H_2}:\, L^{1,K_2}(H_2)\rightarrow L^{1,{\mathbb{S}}}(D)$ with
 \begin{equation}\label{I-H2}
(\mathcal{I}_{H_2} f_{H_2})(a)= C_{H_2} \, f_{H_2}(a) \, \Delta_n^2(a) =:f_D(a), \quad a \in D \,.
 \end{equation}
\item[(2)]
Almost every matrix $g \in G$ 
has $n$ distinct squared singular values \linebreak $a_1,\hdots,a_n > 0$, 
and $\mathcal{I}_G : L^{1,\hat{K}}(G) \to L^{1,\mathbb{S}}(A)$ with
 \begin{equation}\label{I-G}
(\mathcal{I}_G f_G)(a) = C_G \, f_G(\sqrt{a}) \, \Delta_n^2(a) =:f_A(a) \,,\quad a \in A \,.
\end{equation}
\item[(3)]
Almost every matrix $y \in M \in \{ M_\nu, H_1, H_4 \}$ 
has $n$ distinct squared singular values \mbox{$a_1,\hdots,a_n > 0$},
which are the squares of the non-zero eigenvalues
$\pm \sqrt{a_1},\hdots,\pm \sqrt{a_n}$, 
and $\mathcal{I}_{M} : L^{1,K}(M) \to L^{1,\mathbb{S}}(A)$ with
 \begin{equation}\label{I-M}
(\mathcal{I}_{M} f_{M})(a) = C_M \, (\det a)^\nu \, f_{M}\left(\iota_{M}(a)\right) \Delta_n^2(a) =:f_A(a) \,,\quad a \in A \,,
\end{equation}
with $\nu \in \mynat_0 \cup \{ \pm \tfrac12 \}$ and $\iota_M$ as defined below.
\end{itemize}
Here
\begin{align*}
&\text{$\nu \in \mynat$ is given} 
&&\text{and}
&&\iota_{M}(a) := \left[\begin{array}{cc} 0 & \sqrt{a}\,\Pi_{n,n+\nu} \\ \Pi_{n,n+\nu}^*\,\sqrt{a} & 0 \end{array}\right] 
\mskip-80mu&&\mskip+80mu\text{for $M = M_\nu$\,,} \\[+10pt]
&\nu := -\tfrac12 
&&\text{and}
&&\iota_M(a) := \sqrt{a} \otimes \tau_2
&&\text{for $M = H_1 = \imath {\rm o}(2n)$\,,} \\
&\nu := +\tfrac12 
&&\text{and}
&&\iota_M(a) := \left[\begin{array}{cc} \sqrt{a} \otimes \tau_2 & 0 \\ 0 & 0 \end{array}\right]
&&\text{for $M = H_1 = \imath {\rm o}(2n\!+\!1)$\,,} \\
&\nu := +\tfrac12 
&&\text{and}
&&\iota_M(a) := \sqrt{a} \otimes \tau_3
&&\text{for $M = H_4 = \imath {\rm usp}(2n)$\,,}
\end{align*}
where $\Pi_{j,k}$ ($j \le k$) is the projection onto the first $j$ out of $k$ rows,
the square root $\sqrt{a}$ is taken component-wise,
$$
\tau_2 := \left[\begin{array}{cc} 0 & -\imath \\ \imath & 0 \end{array}\right]
\quad\text{and}\quad
\tau_3 := \left[\begin{array}{cc} 1 & 0 \\ 0 & -1 \end{array}\right]
$$
are the second and third Pauli matrix, and
$$
x \otimes y := \left( \begin{array}{ccc}
	x_{11} y & x_{12} y & \cdots \\
	x_{21} y & x_{22} y & \cdots \\
	\vdots & \vdots & \\
\end{array} \right)
$$
is the Kronecker product of two matrices $x = (x_{ij})$ and $y = (y_{ij})$.
The constants in Eqs.~\eqref{I-H2} -- \eqref{I-M} are given by
\begin{equation}\label{const}
C_{H_2}= \frac{1}{n!}\prod_{j=0}^{n-1}\frac{\pi^j}{j!} \,,\
C_{G}=C_{n,0}^* \,,\
C_{M_\nu}=C_{n,\nu}^* \,,\
C_{H_1}=C_{n,\nu}^* \,,\
C_{H_4}=\frac{C_{n,\nu}^*}{2^{n(n-1)}} \,,
\end{equation}
where
\begin{equation}\label{const*}
C_{n,\nu}^*= \frac{1}{n!}\prod_{j=0}^{n-1}\frac{\pi^{2j+\nu+1}}{\Gamma[j+\nu+1]j!}
\end{equation}
and $\Gamma$ denotes the Gamma function.
These constants may be found using the techniques
from Chapter~3 in \cite{Hua}, for example.

Let us emphasize that the $K$-invariance of the matrix functions $f_M$
is crucial for the bijectivity of the mappings $\mathcal{I}_M$.
Furthermore, the mappings $\mathcal{I}_M$ remain bijective
when restricted to probability densities.
Finally, the reader familiar with \cite{KK:2016a} should be warned 
that the names of the operators $\mathcal{I}_M$
follow a different logic than in \cite{KK:2016a}.

In particular, the $K$-invariant functions on $M$ correspond 
to symmetric functions of $n$ eigenvalues or squared singular values.
This observation will also be important in Section~\ref{sec:transforms},
where we introduce the multivariate transforms central to our approach.

\subsection{Convolutions}
\label{sec:Convolutions}

On the linear matrix spaces $M = H_1,H_2,H_4,M_\nu$,
the additive convolution is defined by
\begin{align}
\label{addconv}
(f_M \ast h_M)(y) =\int_M f_M(y') h_M(y-y') \, dy' \qquad (y \in M)
\end{align}
for $f_M,h_M \in L^1(M)$,
and on the matrix group $M = G$, 
the multiplicative con\-volution is defined by 
\begin{align}
\label{multconv}
(f_M \circledast h_M)(g)&=\int_M f_M(g') h_M((g')^{-1} g) \, d^*g' \qquad (g \in G)
\end{align}
for $f_M,h_M \in L^1(M)$, where $d^*g'=dg'/\det(g'{g'}^*)^n$ denotes the Haar measure on~$G$.
In terms of random matrices, these convolutions describe the density
of the sum or product of two independent random matrices $X_1$ and $X_2$
with the densities $f_M$ and $h_M$. For instance, in the additive case, 
we have
\begin{align*}
   \ee(\varphi(X_1+X_2)) 
&= \int_M \int_M \varphi(y_1+y_2) f_M(y_1) h_M(y_2) \, dy_2 \, dy_1 \\
&= \int_M \int_M \varphi(y) f_M(y_1) h_M(y - y_1) \, dy \, dy_1
\end{align*}
for all non-negative measurable functions $\varphi$,
where we have made the change of variables $y=y_1+y_2$, $dy=dy_2$.
This shows that $X_1+X_2$ has the density \eqref{addconv}.
In the multiplicative case, the claim follows by a similar calculation
using the change of variables $g=g_1g_2$, $d^*g = d^* g_2$.

When $f_M$ and $h_M$ are additionally $K$-invariant, their convolution is also $K$-invariant. 
It is then natural to consider the convolution at the level of the spectral densities.
We denote these \emph{induced convolutions} by
\begin{equation}
\label{conv-G}
f_A\circledast h_A := \mathcal{I}_{G}(\mathcal{I}_{G}^{-1} f_A \circledast \mathcal{I}_{G}^{-1} h_A)
\qquad (f_A,h_A\in L^{1,\mathbb{S}}(A))
\end{equation}
for $M = G$, by
\begin{equation}
\label{conv-H2}
f_D \ast h_D := \mathcal{I}_{H_2}(\mathcal{I}_{H_2}^{-1} f_D \ast \mathcal{I}_{H_2}^{-1} h_D)
\qquad (f_D,h_D\in L^{1,\mathbb{S}}(D))
\end{equation}
for $M = H_2$, and by
\begin{equation}
\label{conv-M}
f_{A}\ast_{\nu}h_{A} := \linebreak \mathcal{I}_{M}(\mathcal{I}_{M}^{-1} f_A \ast \mathcal{I}_{M}^{-1} h_A) \,, 		
\qquad (f_A,h_A\in L^{1,\mathbb{S}}(A))
\end{equation}
for $M \in \{ M_\nu,H_1,H_4 \}$, with $\nu \in \mynat \cup \{ \pm \tfrac12 \}$
defined as in \eqref{I-M}.

In Eq.~\eqref{conv-M}, the problem arises that for $\nu = +\tfrac12$,
there are two different choices for $M$, namely $M = \imath{\rm o}(2n+1)$ and $M = \imath{\rm usp}(2n)$.
However, as we will see in Section~\ref{sec:transforms}, both choices lead to the same convolution
$\ast_{1/2}$ on the space $L^{1,\mathbb{S}}(A)$.

We will use the induced convolutions $\circledast$, $\ast$ and $\ast_\nu$ primarily for $n = 1$,
where they reduce to convolutions on the spaces $L^1(\myreal_+)$, $L^1(\myreal)$ and $L^1(\myreal_+)$,
respectively. While the first two convolutions are simply 
the ordinary multiplicative and additive convolutions
on $\myreal_+$ and $\myreal$, respectively, 
the third convolution is closely related to the \emph{Hankel convolution};
see the comments at the beginning of Section~\ref{sec:transforms}.

\begin{remark}[Products of Rectangular Matrices]
As already mentioned, the multi\-plicative convolution on $G$ may be used
to study products of independent bi-invariant random square matrices.
One could also consider products of independent bi-invariant random rectangular matrices.
However such products can always be traced back to products of independent bi-invariant 
random square matrices with modified but related densities, see \cite{IK:2013}. 
This statement is not true for sums of $K$-invariant rectangular random matrices. 

For instance, given two independent bi-invariant random matrices $g_1\in \mathbb{C}^{n\times(n+\nu_1)}$ and $g_2\in \mathbb{C}^{(n+\nu_1)\times(n+\nu_2)}$ with $\nu_1,\nu_2 \in \mynat_0$,
we may write
$$
g_1 = \hat{g}_1 \Pi_{n,n+\nu_1} k_1 \quad\text{and}\quad \Pi_{n,n+\nu_1} k_1 g_2 = \hat{g}_2 \Pi_{n,n+\nu_2} k_2
$$
with $\hat{g}_1,\hat{g}_2 \in \mathbb{C}^{n \times n}$ bi-invariant,
$k_1 \in {\rm U}(n+\nu_1),k_2 \in {\rm U}(n+\nu_2)$ Haar distributed,
and all of them independent, to obtain a representation
$g_1g_2=\hat{g}_1\hat{g}_2 \Pi_{n,n+\nu_2} k_2$
for the product.

In contrast to that, given two independent bi-invariant random matrices $y_1,y_2 \linebreak[1] \in \mathbb{C}^{n\times(n+\nu)}$,
it does not seem possible in general to find a representation 
$y_1+y_2=(\hat{y}_1+\hat{y}_2) \Pi_{n,n+\nu} k$
with $\hat{y}_1,\hat{y}_2 \in \mathbb{C}^{n \times n}$ bi-invariant, 
$k \in {\rm U}(n+\nu)$ Haar distributed, and all of them independent.
\end{remark}

\subsection{Polynomial Ensembles}
\label{sec:PolEns}

Polynomial ensembles were introduced by \linebreak Kuijlaars and co-authors~\cite{KS:2014}. They have a simple algebraic structure which occurs in many prominent random matrix ensembles. In a previous work \cite{KK:2016b}, we identified a subset of these polynomial ensembles which is closed under the multiplicative convolution \eqref{conv-G} on $G$. This behaviour is in general not true for general polynomial ensembles. After that, a subset of polynomial ensembles with similar properties was investigated for the additive convolution \eqref{conv-H2} on $H_2$ in \cite{KR:2016}.

The main purpose of this subsection is to introduce similar subsets for the additive convolution \eqref{conv-M} on the spaces $M_\nu$, $H_1$ and $H_4$ 
of rectangular matrices, Hermitian anti-symmetric matrices, and Hermitian anti-self-dual matrices. For comparison and for later use, we also briefly describe the existing results for the other classes. Thus, we define three  subsets of polynomial ensembles.  

For an interval $\mathbb{I} \subset \myreal$ and a (measurable) subset $\mathcal{N} \subset \myreal$, let
\begin{align*}
 L^{1}_{\mathbb{I}}(\mathcal{N})=\biggl\{f\in L^1(\mathcal{N}) \,\biggl|\, \text{for all $\kappa\in\mathbb{I}$}:\ \int_{\mathcal{N}}|x|^{\kappa-1}|f(x)|\,dx<\infty\biggl\} \,.
\end{align*}

\pagebreak[2]

\begin{definition}[Polynomial ensembles]\label{def:PolEns} \
\begin{enumerate}[(i)]
\item
See~\cite{KS:2014}.
Let $n \in \mynat$ and $\mathcal{N}\subset \myreal$ be a subset.
A probability measure $\mu$ on $\mathcal{N}^n$ is called the \emph{polynomial ensemble} on $\mathcal{N}^n$
associated with the one-point weights $w_1,\ldots,w_n\in L^{1}_{[1,n]}(\mathcal{N})$ 
if it has a Lebesgue density of the form
\begin{equation}\label{jpdf-pol-ens}
	p(a)=C_n[w]\Delta_n(a)\det[w_b(a_c)]_{b,c=1,\ldots,n} \ge 0 \,,\quad a \in \mathcal{N}^n \,,
\end{equation}
where $C_n[w] > 0$ is the normalization constant. 
When $\mathcal{N}= \myreal$ or $\mathcal{N}= \myreal_+$, we also call $\mu$
a polynomial ensemble on $D$ or $A$, in line with 
our identifications $D \simeq \myreal^n$ and $A \simeq \myreal_+^n$.
\item
For $M \in \{ G,H_2,M_\nu,H_1,H_4 \}$,
a probability measure on $M$
with a density $f_M \in L^{1,K}_{\rm Prob}(M)$
is called a \emph{polynomial ensemble on $M$} 
if $\mathcal{I}_{M} f_{M}$
is the density of a polynomial ensemble on $D$ or $A$
(i.e.\@ if the induced spectral density is a~polynomial ensemble on $D$ or $A$).
\end{enumerate}
In part (ii), we also write $f_M = \mathrm{PE}_M(w_1,\hdots,w_n)$ for the density,
where $w_1,\hdots,w_n$ are the one-point weights for $\mathcal{I}_M f_M$.
\end{definition}

For each of our matrix spaces $M \in \{ G,H_2,M_\nu,H_1,H_4 \}$, there exists a subclass 
of polynomial ensembles with nice closure properties under the respective convolution.
We baptize them \emph{P\'olya ensembles} due to their close relation to P\'olya frequency functions 
(as discussed further below).

\pagebreak  % DANGER

\begin{definition}[P\'olya ensembles]\label{def:PolyaEns}\
\begin{enumerate}[(1)]
\item
A polynomial ensemble on $H_2$ is called a \emph{P\'olya ensemble on~$H_2$} if
\begin{equation}\label{pol-der-1}
w_j(x)=\left(-\frac{\partial}{\partial x}\right)^{j-1}\omega(x)\ \text{ for all }x\in\mathbb{R} \text{ and }j=1,\ldots,n
\end{equation}
with
\begin{multline*}
\omega \in L^{1}_{H_2}(\myreal) :=\biggl\{f\in L^1(\mathbb{R})\biggl| f\text{ is non-negative and $(n-1)$-times differentiable} \\
 \text{and for all $j=0,\ldots,n-1$} : \frac{\partial^jf}{\partial x^j}(x) \in L^1_{[1,n]}(\mathbb{R}) \biggl\}.
\end{multline*}
\item
A polynomial ensemble on $G$ is called a \emph{P\'olya ensemble on $G$} if 
\begin{equation}\label{pol-der-3}
w_j(x)=\left(-x\frac{\partial}{\partial x}\right)^{j-1}\omega(x)\ \text{ for all }x\in\mathbb{R}_+\text{ and }j=1,\ldots,n
\end{equation}
with 
\begin{multline*}
\omega \in L^{1}_{G}(\myreal_+):=\biggl\{f\in L^1(\mathbb{R}_+)\biggl| f\text{ is  non-negative and $(n-1)$-times differentiable}\\
 \text{and for all $j=0,\ldots,n-1$} : \left(x\frac{\partial}{\partial x}\right)^jf(x) \in L^1_{[1,n]}(\myreal_+) \biggl\}.
\end{multline*}
\item	
For $M \in \{ M_\nu,H_1,H_4 \}$,
a polynomial ensemble on $M$ is called a \emph{P\'olya ensemble on $M$} if
\begin{equation}\label{pol-der-2}
w_j(x)=\left(x^{\nu}\frac{\partial}{\partial x}\frac{1}{x^{\nu-1}}\frac{\partial}{\partial x}\right)^{j-1}\omega(x)\ \text{ for all }x\in\mathbb{R}_{+}\text{ and }j=1,\ldots,n
\end{equation}
with
\begin{multline*}
\omega \in L^{1}_{M}(\myreal_+) := \biggl\{f\in L^1(\mathbb{R}_+)\biggl| f\text{ is non-negative and $2(n-1)$-times differentiable,} \\
\text{for all $j=0,\ldots,n-1$} : \left(x^{\nu}\frac{\partial}{\partial x}\frac{1}{x^{\nu-1}}\frac{\partial}{\partial x}\right)^j f(x) \in L^1_{[1,n]}(\mathbb{R}_+) \text{ and } \nonumber\\
\text{for all $l=0,\ldots,n-2$} : \lim_{x\to0} x^{\nu+1}\frac{\partial}{\partial x}\frac{1}{x^\nu}\left(\frac{\partial}{\partial x}x^{\nu+1}\frac{\partial}{\partial x}\frac{1}{x^\nu}\right)^lf(x)=0\biggl\}.
\end{multline*}
\end{enumerate}
In all cases (1) -- (3), we also write $f_M = \mathrm{PE}_M(\omega)$ for the density.
\end{definition}

We want to observe that the differential operator in the first large round brackets in part (3) satisfies $x^\nu\partial_x x^{1-\nu}\partial_x=\partial_x x^{\nu+1}\partial_x x^{-\nu}$. Furthermore, it is equal to $y^{(2\nu-1)/2} \, [\partial_y^2 -(4\nu^2-1)/(4y^2)] \, y^{(1-2\nu)/2}/4$ with $y=\sqrt{x}$, and it simplifies to \linebreak $y^{(2\nu-1)/2}\partial_y^2 y^{(1-2\nu)/2}/4$ for $\nu=\pm1/2$. The latter simplification is related to the~fact that the Bessel functions in the corresponding Hankel transform~\eqref{S2-def} below reduce to the trigonometric functions $\cos(z)/\sqrt{z}$ and $\sin(z)/\sqrt{z}$, respectively.

Note that we use the abbreviation $\mathrm{PE}_M$ both for polynomial ensembles and for P\'olya ensembles. However, this is unlikely to cause confusion, since the definitions coincide for $n = 1$ and the numbers of parameters are different for $n > 1$.

P\'olya ensembles on $G$ and on $H_2$ were already introduced and investigated in \cite{KK:2016a,KK:2016b} and in \cite{KR:2016}, respectively. In those works, they were called \emph{polynomial ensembles of derivative type}. Our motivation to call all these ensembles \emph{P\'olya ensembles} will become clear in Theorem~\ref{thm:rel-Polya} below.

The P\'olya ensembles cover quite a lot of the classical random matrix ensembles
\cite{Mehta}, see also \cite{KK:2016a,KK:2016b,KR:2016}.
We name only a few examples here.

\begin{examples}[``Classical'' P\'olya Ensembles]\label{ex:PolyaEns} \ 
\begin{enumerate}[(a)]
\item (\emph{Gaussian ensembles})
For $M = M_\nu$ or $M = H_\beta$ and $\varepsilon > 0$, 
the matrix density $q_{M,\varepsilon}(y) \propto e^{- \tr(y^* y)/(2\varepsilon)}$ 
defines a P\'olya ensemble on $M$.
The underlying weight function $\omega$ is 
$\omega_\varepsilon(x) \propto x^\nu e^{-x/\varepsilon}$ for $M = M_\nu, H_1, H_4$
and $\omega_\varepsilon(x) \propto e^{-x^2/(2\varepsilon)}$ for $M = H_2$. 
For $M = G$, the log-normal density $\omega_\varepsilon(x) \propto x^{-1} e^{-(\log x)^2/(2\varepsilon)}$
gives rise to a P\'olya ensemble on $M$.
Similarly to the Gaussian distribution in classical probability theory,
these ensembles show up in connection with the~heat kernel 
on the respective matrix spaces, see e.g.\@ Chapter XII.5 in \cite{JL:SLR}.
For this reason, it seems appropriate to call them \emph{Gaussian ensembles}.
\item
The P\'olya ensemble on $H_2$ with $\omega(x)= e^{-(x-\alpha)^2/2}$ is the Gaussian unitary~en\-semble (GUE) 
with shift $\alpha\in\mathbb{R}$, with density $p_{H_2}(y)\propto \exp[-\tr(y-\alpha\eins_n)^2/2]$.
\item	
Consider the classical Laguerre ensemble on $A \simeq \myreal_+^n$ given by
\begin{align}
\label{laguerre}
p_A(a)\propto \det a^\nu \, e^{-\tr a} \, \Theta(a) \, |\Delta_n(a)|^2
\qquad (a \in \myreal^n)\,,
\end{align}
where $\nu > -1$ and $\Theta(y)$ denotes the Heaviside step function for matrices, 
which~is $1$ for $y \in H_2$ positive-definite and $0$ otherwise.
This ensemble induces P\'olya ensembles on~$H_2$ and on~$G$ and,
for $\nu$ an integer, also on~$M_\nu$.

On $H_2$ we get the P\'olya ensemble with the density
$p_{H_2}(y)\propto \det y^\nu e^{-\tr y}\Theta(y)$,
which is also called the \emph{induced Laguerre ensemble}.
The corresponding weight function is
$\omega(x) = x^{n+\nu-1}e^{-x}\Theta(x)$,
with $\Theta(x)$ the ordinary Heaviside step function on $\myreal$.

On the space $G$, we get the P\'olya ensemble with the density
$p_G(g)\propto\det(gg^*)^\nu\exp[-\tr gg^*]$, 
which is also known as the \emph{induced Ginibre ensemble}.
Now the corresponding weight function is $\omega(x) = x^\nu \exp[-x]$.

On $M_\nu$ we get the P\'olya ensemble with the density
$p_{M_\nu}(y)\propto\exp[-\tr yy^*/2]$, 
which is also known as the \emph{chiral Gaussian unitary ensemble}
or \emph{chiral Wishart ensemble}
and which is a special case of the \emph{Gaussian ensemble}
discussed in (a). 
This time, the underlying weight function is also $\omega(x) = x^{\nu} \exp[-x]$.
In contrast to that, for $\mu \ne \nu$, the density \eqref{laguerre}
does not induce a P\'olya en\-semble on $M_\mu$ in~general.

Note that the weight functions in the above P\'olya ensembles are different, 
since different differential operators are applied to~create 
the joint probability density \eqref{laguerre}.

 \item	Further examples of P\'olya ensembles on $G$ are the \emph{induced Jacobi ensemble} ($p_G(g) \propto \det(gg^*)^\nu$ $\det(\eins_n-gg^*)^\mu\Theta(\eins_n-gg^*)$) and the \emph{induced Cauchy-Lorentz ensemble} ($p_G(g) \propto \det(gg^*)^\nu\det(\eins_n+gg^*)^{-2n-\nu-\mu}$) with the weight functions $\omega(x)=x^\nu(1-x)^{n+\mu-1}$ $\Theta(1-x)$ and $\omega(x)=x^\nu/(1+x)^{n+\nu+\mu+1}$, respectively, with $\nu,\mu>-1$.
\end{enumerate}
\end{examples}

\subsection{Convolution Theorems}
The polynomial ensembles on $M$ are special in~that
the relevant multivariate transform is of determinantal form
(see Theorem~\ref{thm:mult-trans-pol}),
and the P\'olya ensembles on $M$ are even more special 
in that the multivariate transform factorizes
(see Corollary \ref{cor:mult-trans-pol}).
The following \emph{convolution theorems} 
are simple consequences of these observations.
Here $w_1,\hdots,w_n$ and $\omega$ are suitable weight functions
as in Definitions \ref{def:PolEns} and \ref{def:PolyaEns},
respectively.

\def\PE{\operatorname{PE}}
\begin{theorem}[Convolution of P\'olya Ensembles and Polynomial Ensembles]\label{thm:pol-conv} \ 
\begin{enumerate}[(1)]
\item See~\cite{KR:2016}.
$\PE_{H_2}(w_1,\hdots,w_n) \ast \PE_{H_2}(\omega) = \PE_{H_2}(w_1 \ast \omega,\hdots,w_n \ast \omega)$.
\item See~\cite{KK:2016b}.
$\PE_{G}(w_1,\hdots,w_n) \circledast \PE_{G}(\omega) = \PE_{G}(w_1 \circledast \omega,\hdots,w_n \circledast \omega)$.
\item
For $M \in \{ M_\nu,H_1,H_4 \}$,  we have
$$\PE_{M}(w_1,\hdots,w_n) \ast \PE_{M}(\omega) = \PE_{M}(w_1 \ast_\nu \omega,\hdots, w_n \ast_\nu \omega).$$
\end{enumerate}
\end{theorem}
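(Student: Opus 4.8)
The plan is to prove part~(c); parts~(a) and~(b) are the corresponding statements for $H_2$ and $G$ and are taken from the cited works \cite{KR:2016,KK:2016b}, so it suffices to run the same argument with the matrix Hankel transform of Section~\ref{sec:transforms} in place of the matrix Fourier and spherical transforms. Everything rests on two facts about this transform $\mathcal{T}_M$, adapted to the additive convolution on $M \in \{M_\nu, H_1, H_4\}$: it is multiplicative, $\mathcal{T}_M(f_M \ast h_M) = \mathcal{T}_M(f_M)\,\mathcal{T}_M(h_M)$ on $L^{1,K}(M)$ (a genuine convolution theorem, built on the relevant group integrals of Berezin-Karpelevich and Gelfand-Na\u{\i}mark type), and it evaluates explicitly on polynomial ensembles.

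First I would record the shape of $\mathcal{T}_M$ on a polynomial ensemble. By Theorem~\ref{thm:mult-trans-pol}, the radial part of $\mathcal{T}_M\,\PE_M(w_1,\ldots,w_n)$ at spectral variables $s_1,\ldots,s_n$ equals, up to a constant, $\det[\hat{w}_b(s_c)]_{b,c=1,\ldots,n}/\Delta_n(s)$, where $\hat{w}_b$ is the univariate Hankel transform of $w_b$ and the $1/\Delta_n(s)$ stems from the underlying group integral. Next, by Corollary~\ref{cor:mult-trans-pol}, the transform of the P\'olya ensemble $\PE_M(\omega)$ factorizes as $\prod_{c=1}^n \hat{\omega}(s_c)$ up to a constant. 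The point is that no residual $1/\Delta_n(s)$ survives here: the differential operator in \eqref{pol-der-2} acts under the Hankel transform as multiplication by a linear function $g(s)$, so $\hat{w}_b = g^{\,b-1}\hat{\omega}$, and the Vandermonde $\det[g(s_c)^{b-1}]_{b,c} = \Delta_n(g(s)) \propto \Delta_n(s)$ cancels the prefactor.

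The remaining step is algebraic. Multiplying the two transforms via the convolution theorem gives, up to a constant, $\bigl(\det[\hat{w}_b(s_c)]_{b,c}/\Delta_n(s)\bigr) \prod_{c=1}^n \hat{\omega}(s_c)$; absorbing $\hat{\omega}(s_c)$ into the $c$-th column turns this into $\det[\hat{w}_b(s_c)\hat{\omega}(s_c)]_{b,c}/\Delta_n(s)$. By the univariate convolution theorem for the Hankel transform (the $n=1$ instance of $\ast_\nu$), $\hat{w}_b(s)\hat{\omega}(s) = \widehat{(w_b \ast_\nu \omega)}(s)$, so this equals $\det[\widehat{(w_b \ast_\nu \omega)}(s_c)]_{b,c}/\Delta_n(s)$, which by Theorem~\ref{thm:mult-trans-pol} is exactly the transform of $\PE_M(w_1 \ast_\nu \omega,\ldots,w_n \ast_\nu \omega)$. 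Injectivity of $\mathcal{T}_M$ then gives the claim. It is precisely the absence of a second $1/\Delta_n(s)$ that keeps the product in polynomial-ensemble form; convolving two generic polynomial ensembles would produce $\Delta_n(s)^{-2}$ and leave the class, which is why one factor must be a P\'olya ensemble.

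The main obstacle I anticipate is analytic rather than algebraic: justifying the manipulations on the unnormalized space $L^{1,K}(M)$ rather than only on probability densities; carrying out the integration-by-parts that turns \eqref{pol-der-2} into multiplication by $g(s)$, where the boundary conditions $\lim_{x\to0}(\cdots)=0$ in Definition~\ref{def:PolyaEns}\,(3) are exactly what is needed; and checking that $w_b \ast_\nu \omega$ again lies in $L^1_{[1,n]}(\myreal_+)$ so that the right-hand side is a genuine polynomial ensemble. Tracking the constants $C_M$, $C_{n,\nu}^*$ and the weight factor $(\det a)^\nu$ entering $\mathcal{I}_M$ is routine but must be done to pin down the proportionality constants.
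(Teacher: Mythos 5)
Your proposal follows essentially the same route as the paper's proof: apply the multiplication theorem for the multivariate Hankel transform (Theorem~\ref{thm:conv-fact}), combine the determinantal formula of Theorem~\ref{thm:mult-trans-pol} with the factorized form of Corollary~\ref{cor:mult-trans-pol}, absorb the scalar factors column-wise into the determinant, identify the entries via the univariate ($n=1$) convolution theorem, and conclude by injectivity of the transform. The analytic caveats you flag (integrability of $w_b \ast_\nu \omega$, the boundary terms in the integration by parts) are real but are exactly the points the paper's own (very terse) proof also relies on, so there is no substantive divergence.
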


\begin{corollary}[Convolution of P\'olya Ensembles]\label{cor:pol-conv}\
\begin{enumerate}[(1)]
\item See~\cite{KR:2016}.
$\PE_{H_2}(\omega) \ast \PE_{H_2}(\chi) = \PE_{H_2}(\omega \ast \chi)$.
\item See~\cite{KK:2016b}.
$\PE_{G}(\omega) \circledast \PE_{G}(\chi) = \PE_{G}(\omega \circledast \chi)$.
\item
For $M \in \{ M_\nu,H_1,H_4 \}$, we have
$\PE_{M}(\omega) \ast \PE_{M}(\chi) = \PE_{M}(\omega \ast_\nu \chi)$.
\end{enumerate}
\end{corollary}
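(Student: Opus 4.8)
The plan is to derive Corollary~\ref{cor:pol-conv} as the special case of Theorem~\ref{thm:pol-conv} in which the polynomial ensemble appearing on the left is itself a P\'olya ensemble. Concretely, for each of the three cases I would start from the identity proved in Theorem~\ref{thm:pol-conv} and substitute the particular one-point weights $w_j$ that arise from a second weight function $\chi$ via the differential-operator prescription of Definition~\ref{def:PolyaEns}. For case (c), say, one has $\PE_M(\chi) = \PE_M(w_1,\hdots,w_n)$ with $w_j = D^{j-1}\chi$, where $D := x^\nu \partial_x x^{1-\nu}\partial_x$ is the operator in~\eqref{pol-der-2}; Theorem~\ref{thm:pol-conv}(c) then immediately gives
\begin{equation*}
\PE_M(\chi) \ast \PE_M(\omega) = \PE_M(D^{0}\chi \ast_\nu \omega,\, \hdots,\, D^{n-1}\chi \ast_\nu \omega).
\end{equation*}
So the entire content of the corollary reduces to showing that the right-hand side is again a P\'olya ensemble with a single weight, namely that
\begin{equation*}
D^{j-1}\chi \ast_\nu \omega = D^{j-1}(\chi \ast_\nu \omega), \qquad j = 1,\hdots,n,
\end{equation*}
and the analogous commutation relations in cases (a) and (b) with $\ast$, $\circledast$ and the operators $-\partial_x$, $-x\partial_x$ respectively.

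Thus the key step is a \emph{commutation lemma}: the differential operator defining the P\'olya structure commutes with the relevant convolution, in the sense that applying the operator to one factor of the convolution equals applying it to the whole convolution. For case (a) this is just the standard fact that $(-\partial_x)(f \ast g) = ((-\partial_x)f) \ast g$ for ordinary convolution on $\myreal$, which holds under the differentiability and integrability hypotheses built into $L^1_{H_2}(\myreal)$. For case (b) the corresponding statement is that the Euler operator $-x\partial_x$ is the generator associated with the multiplicative convolution $\circledast$ on $\myreal_+$ (equivalently, after the substitution $x = e^t$ it becomes ordinary differentiation and $\circledast$ becomes ordinary convolution in $t$). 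For case (c) the analogous role is played by the Hankel-type convolution $\ast_\nu$, and here the statement is that $D$ acts as the corresponding ``multiplier'' generator. The cleanest way to establish all three uniformly is to pass to the transform side: by the comments preceding this corollary and by Corollary~\ref{cor:mult-trans-pol}, a P\'olya ensemble $\PE_M(\chi)$ is characterized on the transform side by a factorizing kernel, and the operator $D^{j-1}$ corresponds to multiplication by a fixed monomial-type factor in the transform variable. Under the relevant transform (Fourier for $H_2$, spherical/Mellin for $G$, Hankel for $M_\nu,H_1,H_4$) the convolution becomes a pointwise product, so the commutation relation becomes the trivial commutativity of multiplication by that factor with multiplication by the transform of the second weight.

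Concretely I would proceed as follows. First, recall from Section~\ref{sec:transforms} the appropriate transform $\mathcal{T}$ under which the induced convolution ($\ast$, $\circledast$, or $\ast_\nu$) becomes a pointwise product, and under which the P\'olya differential operator ($-\partial_x$, $-x\partial_x$, or $D$) becomes multiplication by a fixed symbol $s(\cdot)$. Second, verify that $\mathcal{T}$ intertwines each operator with multiplication by its symbol; this is where I expect to invoke the integrability, differentiability, and (for case (c)) the boundary-vanishing conditions at $x \to 0$ encoded in the membership $\omega,\chi \in L^1_M(\myreal_+)$, precisely so that the integration-by-parts steps produce no boundary terms and the transformed quantities are well defined. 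Third, compute on the transform side:
\begin{equation*}
\mathcal{T}\bigl(D^{j-1}\chi \ast_\nu \omega\bigr) = s^{\,j-1}\,\mathcal{T}(\chi)\,\mathcal{T}(\omega) = \mathcal{T}\bigl(D^{j-1}(\chi \ast_\nu \omega)\bigr),
\end{equation*}
and invoke injectivity of $\mathcal{T}$ to transfer the identity back. Finally, I would check that $\chi \ast_\nu \omega$ (respectively $\omega \ast \chi$, $\omega \circledast \chi$) again lies in the admissible weight class $L^1_M(\myreal_+)$ so that $\PE_M(\chi \ast_\nu \omega)$ is genuinely a P\'olya ensemble, rather than merely a polynomial ensemble whose weights happen to factor.

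The main obstacle I anticipate is not the algebra on the transform side, which is essentially a one-line computation once the symbols are identified, but rather the analytic bookkeeping in case (c): one must confirm that the convolved weight $\chi \ast_\nu \omega$ satisfies all three defining conditions of $L^1_M(\myreal_+)$ --- the $2(n-1)$-fold differentiability, the weighted $L^1_{[1,n]}$ bounds on its iterated $D$-derivatives, and especially the delicate boundary conditions $\lim_{x\to 0} x^{\nu+1}\partial_x x^{-\nu}(\partial_x x^{\nu+1}\partial_x x^{-\nu})^l f(x) = 0$. Verifying that these vanishing conditions are preserved under $\ast_\nu$ is the genuinely nontrivial point, since it is precisely this boundary behaviour that distinguishes the Hankel-type case from the comparatively straightforward cases (a) and (b); I would handle it by expressing the boundary quantities through the Hankel transform, where the vanishing translates into a decay or integrability statement that is manifestly inherited from the factors $\chi$ and $\omega$.
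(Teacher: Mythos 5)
Your proposal is correct and follows essentially the same route as the paper: the paper likewise works on the transform side where the P\'olya structure factorizes (Corollary~\ref{cor:mult-trans-pol}) and the convolution becomes a pointwise product, and your ``commutation lemma'' $D^{j-1}\chi \ast_\nu \omega = D^{j-1}(\chi \ast_\nu \omega)$ is precisely one of the two displayed identities the paper uses to check that the weight class is preserved. The paper's only refinement on the point you flag as delicate is to dispose of the boundary condition by noting that, given the other hypotheses, $\lim_{x\to 0}(x^{\nu+1}\partial_x x^{-\nu}F)(x)=0$ is equivalent to $\int_0^\infty(\partial_x x^{\nu+1}\partial_x x^{-\nu}F)(x)\,dx=0$, which is inherited under $\ast_\nu$ since the integral of a convolution factorizes.
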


The convolutions on the left hand sides are the matrix convolutions 
introduced in \eqref{addconv} and \eqref{multconv}, while the univariate convolutions on the right sides 
are special cases (for $n = 1$) of the induced convolutions described in \eqref{conv-G} -- \eqref{conv-M}.
The proofs of the Theorem and its Corollary are given in Sec.~\ref{sec:proofs}.

\pagebreak[3]

Let us emphasize that in terms of random matrices, Corollary \ref{cor:pol-conv}
gives the~dis\-tribution of the sum or product of two independent random matrices $X_1$ and $X_2$
on $M$ whose distributions are P\'olya ensembles on $M$.
A similar remark applies to Theorem \ref{thm:pol-conv}.

Finally, as a consequence of Corollary \ref{cor:pol-conv}
and the obvious associativity of all three univariate convolutions, 
each of the three classes of P\'olya ensembles with the corresponding convolution 
becomes a semi-group when adding the appropriate neutral element 
(the Dirac measure in the zero matrix for the additive convolution and the Haar distribution on the unitary group for the multiplicative convolution). Moreover, it has a natural action on the set of all polynomial ensembles on the respective matrix space, as shown by Theorem \ref{thm:pol-conv}.

\subsection{Relation to P\'olya Frequency Functions}
\label{sec:Polya}

We next address the question which weight functions $\omega$
(as in Def.~\ref{def:PolyaEns}) give rise to P\'olya ensembles.
Since this question is trivial for $n = 1$,
we shall always assume that $n \ge 2$.
Our main result shows that for $M = H_2$ and $M = G$,
these weight functions are closely related to P\'olya frequency functions 
\cite{Polya:1913,Polya:1915,Schoen:1951,Karlin:1968},
thereby justifying our name ``P\'olya ensembles''.
For $M \in \{ M_\nu,H_1,H_4 \}$,
the relation is less perfect,
but we~can at~least provide a construction 
by which P\'olya frequency functions
give rise to many (in~fact, infinitely many)
examples of P\'olya ensembles on $M$.

Let us recall the definition of a P\'olya frequency function.

\begin{definition}[P\'olya Frequency Functions, see \cite{Polya:1913,Polya:1915,Schoen:1951}]\label{def:Polya}\
\begin{enumerate}[(a)]
 \item 	Let $N \in \mynat$.
 A measurable function $f$ is called a \emph{P\'olya frequency function of order $N$} if
			\begin{equation}\label{Polya-def}
			\Delta_n(x)\Delta_n(y)\det[f(x_b-y_c)]_{b,c=1,\ldots,n}\geq0
			\end{equation}
			for all $n = 1,\hdots,N$ and all $x,y \in \myreal^n$.
 \item	When Eq.~\eqref{Polya-def} holds for any integer $n\in\mathbb{N}$, $f$ is called a \emph{P\'olya frequency function of infinite order}.
\end{enumerate}
\end{definition}

P\'olya frequency functions (also called \emph{totally positive functions})
 play a role in approximation theory, 
where they give rise to translation-invariant \emph{totally positive kernels}.
The corresponding theory is mainly due to Schoenberg and to Karlin,
based on earlier work by P\'olya.
See Chapter~7 in \cite{Karlin:1968} for background information.

Some examples of P\'olya frequency functions can be found 
in \cite{Karlin:1968} and \cite{Faraut2006}.
Let us highlight a few examples important in random matrix theory.

\begin{examples}[P\'olya Frequency Functions]\
\label{ex:Polya}

\begin{enumerate}[(a)]
	\item	The Gaussian density $f(x)=e^{-(x-\alpha)^2/2}$ with $\alpha\in\mathbb{R}$ is a P\'olya frequency function of infinite order.
	\item	The Heaviside step-function $f(x)=\Theta(x)$ is a P\'olya frequency function of infinite order because all combinations which do not satisfy the interlacing condition $y_1<x_1<y_2<x_2<\ldots<y_n<x_n$ vanish.
	\item	The Heaviside step-function with a monomial $f(x)=x^\nu\Theta(x)$ and $\nu> N-2$ is a P\'olya frequency function of order $N$ because of the identity
				\begin{multline}
				\quad\det[(x_b-y_c)^\nu\Theta(x_b-y_c)]_{b,c=1,\ldots,n}
				=\left(\prod_{j=0}^{n-1}\frac{\Gamma[\nu+1]}{j!\Gamma[\nu-j+1]}\right)\\ \,\times\, \Delta_n(x)\Delta_n(y)\int_{K_2}\det(x-kyk^*)^{\nu-n+1}\Theta(x-kyk^*)d^*k\quad
				\end{multline}
				for all $n\leq N$, see \cite[Theorem 2.3]{KKS:2015}. This function is even a P\'olya frequency function of infinite order for any integer $\nu\geq0$ though for less obvious reasons, cf. Eq.~\eqref{Laplace-Polya.b} below.
	\item The function $f(x)=\exp[-e^{-x}]$ (which is closely related to the density
	of the Gumbel distribution)	is a P\'olya frequency function of infinite order, 
	as can be checked by the Harish-Chandra integral
				\begin{multline}
				\quad\det[\exp[-e^{-(x_b-y_c)}]]_{b,c=1,\ldots,n}=\left(\prod_{j=0}^{n-1}\frac{1}{j!}\right) \\ \,\times\, \frac{\Delta_n(e^x)\Delta_n(e^{y})}{\det e^{(n-1)x}}\int_{K_2}\exp[-\tr ke^{-x}k^*e^{y}]d^*k\,,\quad
				\end{multline}
compare Eq.~\eqref{Harish-Chan} below,
where $e^x$, $e^y$ etc.\@ are defined component-wise and interpreted as diagonal matrices
when necessary.
\end{enumerate}
\end{examples}

We showed the positivity for the latter two examples in a way which already connects the problem to group integrals and random matrix theory. We will return to this idea in Subsection~\ref{sec:groupintegrals}. Alternatively, one can derive many of these examples with the help of the complete characterization of P\'olya frequency functions of infinite order via their Laplace transforms~\cite{Schoen:1951,Karlin:1968}.
As discussed in these references, the Laplace transform of 
an integrable P\'olya frequency function $f$ of infinite order is  of the~form
\begin{multline}\label{Laplace-Polya.a}
\int_{-\infty}^\infty f(x) e^{-sx}dx=C \exp[\gamma s^2-\delta s]\prod_{j=1}^\infty\frac{\exp[\delta_j s]}{1+\delta_j s},\  C>0,\ \gamma\ge0,\\
 \delta,\delta_j\in\mathbb{R},\ 0<\gamma+\sum_{j=1}^\infty \delta_j^2<\infty,\ {\rm and}\ \min_{\delta_j>0}\left\{\frac{1}{\delta_j}\right\}>-{\rm Re}\, s>\max_{\delta_j<0}\left\{\frac{1}{\delta_j}\right\},
\end{multline}
when the support of $f$ is contained in $\myreal$
and of the form
\begin{multline}\label{Laplace-Polya.b}
\int_{0}^\infty f(x) e^{-sx}dx=C \exp[-\delta s] \prod_{j=1}^\infty\frac{1}{1+\delta_j s},\ C > 0, \\
\delta,\delta_j\geq0,\, 0<\sum_{j=1}^\infty \delta_j<\infty,\ {\rm and}\ \min_{\delta_j>0}\left\{\frac{1}{\delta_j}\right\}>-{\rm Re}\, s,
\end{multline}
when the support of $f$ is contained in $[0,\infty[$. In particular, the reciprocal Laplace transform of $f$
is an entire function in either case. Unfortunately, there is no such explicit characterization for P\'olya frequency functions of a finite order, although they may yield some interesting random matrix ensembles.
For instance, similarly as in Example \ref{ex:Polya}\,(c), the function $f(x) = x^\nu e^{-x} \, \Theta(x)$ is a P\'olya frequency function of order $N$ for any $\nu > N-2$.
The Laplace transform of this example is given by $\int_0^\infty x^\nu e^{-(1+s)x} dx = \Gamma(\nu+1)/(1+s)^{\nu+1}$,
which is \emph{not} of the form \eqref{Laplace-Polya.a} or \eqref{Laplace-Polya.b} when $\nu$ is not an integer.
Since integrable P\'olya frequency functions of a fixed finite order $N$ are closed under additive convolution~\cite{Schoen:1951,Karlin:1968}, we can at least say that if $f$ is an integrable function such that
\begin{multline}\label{Laplace-Polya.c}
\int_{-\infty}^\infty f(x) e^{-sx}dx=C \exp[\gamma s^2-\delta s]\prod_{j=1}^\infty\frac{\exp[\nu_j\delta_j s]}{(1+\delta_j s)^{\nu_j}},\quad C>0,\ \nu_j,\gamma\geq0,\\
 \delta,\delta_j\in\mathbb{R},\ 0<\gamma+\sum_{j=1}^\infty \nu_j\delta_j^2<\infty,\ {\rm and}\ \min_{\delta_j>0}\left\{\frac{1}{\delta_j}\right\}>-{\rm Re}\, s>\max_{\delta_j<0}\left\{\frac{1}{\delta_j}\right\},
\end{multline}
or
\begin{multline}\label{Laplace-Polya.d}
\int_{0}^\infty f(x) e^{-sx}dx=C \exp[-\delta s]\prod_{j=1}^\infty\frac{1}{(1+\delta_j s)^{\nu_j}},\quad C>0,\ \nu_j \ge 0,\\
 \delta,\delta_j\geq 0,\ 0<\sum_{j=1}^\infty \nu_j\delta_j<\infty,\ {\rm and}\ \min_{\delta_j>0}\left\{\frac{1}{\delta_j}\right\}>-{\rm Re}\, s
\end{multline}
then $f$ is a P\'olya frequency function of order $N$ with $N-1$ smaller than all non-integer exponents $\nu_j$. The problem is that Eqs.~\eqref{Laplace-Polya.c} and \eqref{Laplace-Polya.d} are not exhaustive. For example, the P\'olya frequency functions of order $N=1$ are the positive functions, and the P\'olya frequency functions of order $N=2$ are the positive and log-concave functions, see \cite{Karlin:1968}.

For fixed $n \ge 2$, we will prove Theorems \ref{thm:rel-Polya} and \ref{thm:rel-Polya.b}
below in Section~\ref{sec:thmproof}. Before we state our results, let us recall the main problem. Any function $\omega$ with suitable differentiability and integrability properties as in Def.~\ref{def:PolyaEns} defines a ``signed'' P\'olya ensemble on $M$, meaning that the associated density as in Eq.~\eqref{jpdf-pol-ens}
(but without the prefactor) is not necessarily non-negative.
The hard question is: What are the necessary or sufficient conditions on $\omega$ such that this density is also non-negative, and hence gives rise to a random matrix ensemble after proper normalization? The~following theorem answers this question completely for P\'olya ensembles on $H_2$ and on $G$. 

\begin{theorem}[Relation to P\'olya Frequency Functions]\label{thm:rel-Polya}\
\begin{enumerate}
	\item	Let $\omega\in L^{1}_{H_2}(\mathbb{R})$ with $\omega\neq0$. Then, $\omega$ gives rise to a P\'olya ensemble on $H_2$ if and only if it is a P\'olya frequency function of order $n$.
	\item	Let $\omega\in L^{1}_{G}(\mathbb{R}_+)$ with $\omega\neq0$. Then, $\omega$ gives rise to a P\'olya ensemble on $G$ if and only if $\tilde\omega(x) := \omega(e^{-x})e^{-x}$ is a P\'olya frequency function of order $n$.
\end{enumerate}
\end{theorem}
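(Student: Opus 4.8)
\emph{Plan.} I would prove (1) directly and deduce (2) from it by a logarithmic change of variables. Throughout write $w_j=(-\partial)^{j-1}\omega$, so that the (signed) spectral density of the candidate P\'olya ensemble on $H_2$ is $p(a)=\mathrm{const}\cdot\Delta_n(a)\det[w_b(a_c)]_{b,c=1}^n$, and note that by \eqref{I-H2} the map $\mathcal{I}_{H_2}$ only multiplies by the positive factor $\Delta_n^2$; hence $\omega$ generates a genuine random-matrix ensemble exactly when $p\ge0$. The whole point is therefore to match the sign of $\Delta_n(a)\det[w_b(a_c)]$ with the defining inequality \eqref{Polya-def} of a P\'olya frequency function. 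The bridge between the two is a confluent (Wronskian) limit: expanding $\omega(a_b-y_c)=\sum_{k\ge0}\tfrac{y_c^k}{k!}\,w_{k+1}(a_b)$ and applying the Cauchy--Binet/Andr\'eief identity, I would establish
\begin{equation}\label{pf-confluent}
\det[w_b(a_c)]_{b,c=1}^n=\Big(\prod_{k=0}^{n-1}k!\Big)\lim_{y\to 0}\frac{\det[\omega(a_b-y_c)]_{b,c=1}^n}{\Delta_n(y)},
\end{equation}
so that $p(a)=\mathrm{const}\cdot\lim_{y\to0}\Delta_n(a)\Delta_n(y)\det[\omega(a_b-y_c)]/\Delta_n(y)^2$. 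This gives the ``if'' direction at once: if $\omega$ is a P\'olya frequency function of order $n$, then every pre-limit numerator $\Delta_n(a)\Delta_n(y)\det[\omega(a_b-y_c)]$ is $\ge0$, the denominator $\Delta_n(y)^2$ is positive, and hence $p(a)\ge0$; together with $\omega\neq0$ and the integrability built into $L^{1}_{H_2}(\myreal)$ this yields a bona fide P\'olya ensemble.

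For the ``only if'' direction the confluent limit \eqref{pf-confluent} is too weak, since it only controls the coincident configuration $y\to0$. Instead I would realise the \emph{full} P\'olya-frequency determinant inside the matrix model. Assuming $\omega$ generates a P\'olya ensemble, let $Y$ be the corresponding random matrix on $H_2$ and let $B$ be a fixed Hermitian matrix with distinct eigenvalues $y_1,\dots,y_n$; then $X=Y+UBU^*$ with $U$ Haar on ${\rm U}(n)$ is again a genuine random matrix, so its induced spectral density is non-negative. By Theorem~\ref{thm:pol-conv}(a), applied to the convolution of $\PE_{H_2}(\omega)$ with the orbit of $B$ (whose one-point weights are the point masses $\delta_{y_b}$, so that $\delta_{y_b}\ast\omega=\omega(\,\cdot-y_b)$), this spectral density equals a positive constant times $\Delta_n(x)\det[\omega(x_b-y_c)]_{b,c}/\Delta_n(y)$. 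Non-negativity together with $\Delta_n(y)^2>0$ then gives $\Delta_n(x)\Delta_n(y)\det[\omega(x_b-y_c)]\ge0$ for all $x$ and all $y$, which is precisely the order-$n$ instance of \eqref{Polya-def}.

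The lower orders $m<n$ I would extract from the same inequality by a degeneration: set $y_{m+j}=R_j$ and $x_{m+j}=R_j+s$ ($j=1,\dots,n-m$) with $s$ chosen so that $\omega(s)>0$, and let the $R_j$ tend to $+\infty$ spread far apart. In this limit all cross and far-off-diagonal entries vanish, the matrix becomes block-diagonal with an $m\times m$ block $[\omega(x_b-y_c)]_{b,c=1}^m$ and a far block $\approx\omega(s)\,\eins_{n-m}$, and the two Vandermonde factors share the same leading asymptotics; dividing out this common positive factor leaves exactly the order-$m$ inequality. The main obstacle is this ``only if'' direction, and within it two technical points. First, Theorem~\ref{thm:pol-conv}(a) is stated for $L^1$ weights, whereas the orbit of $B$ corresponds to singular weights $\delta_{y_b}$; I would justify the displayed spectral density either by approximating $\delta_{y_b}$ by narrow bumps and passing to the limit, or by invoking the Harish-Chandra--Itzykson--Zuber integral directly. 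Second, the degeneration requires uniform tail control of $\omega$ and its $(n-1)$ derivatives — here the integrability conditions in $L^{1}_{H_2}(\myreal)$, together with a preliminary smoothing of $\omega$ by a narrow Gaussian (which is itself a P\'olya ensemble and a P\'olya frequency function, Examples~\ref{ex:PolyaEns}(a) and \ref{ex:Polya}(a), so that Corollary~\ref{cor:pol-conv}(a) keeps us inside the class) and an $L^1$/a.e.\ limiting argument would be used to make the factorisation rigorous.

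Finally, for part (2) I would reduce to part (1) rather than repeat the argument. The substitution $x=e^{-u}$ intertwines the multiplicative operator $-x\partial_x$ with the additive operator $-\partial_u$, and the multiplicative convolution on $\myreal_+$ with the additive convolution on $\myreal$, the Jacobian $e^{-u}$ being exactly what turns $\omega$ into $\tilde\omega(u)=\omega(e^{-u})e^{-u}$. A direct computation shows that under this change of variables the signed spectral density of the candidate P\'olya ensemble on $G$ equals, up to a positive factor (the reflection in $\tilde\omega$ absorbing the sign of $\Delta_n(e^{-u})$ relative to $\Delta_n(u)$), the signed spectral density of the candidate P\'olya ensemble on $H_2$ with weight $\tilde\omega$. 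Hence one is non-negative if and only if the other is, and part (1) shows that this happens precisely when $\tilde\omega$ is a P\'olya frequency function of order $n$.
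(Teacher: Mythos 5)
Your strategy is essentially the paper's, resting on the same two pillars: the ``only if'' direction realizes the full order-$n$ P\'olya-frequency determinant as a positive multiple of the spectral density of $Y+UBU^*$, and part (2) is reduced to part (1) by the substitution $x=e^{-u}$ together with the sign bookkeeping for $\Delta_n(e^{-u})$. What you extract from Theorem~\ref{thm:pol-conv}(a) with delta weights is exactly the group-integral identity \eqref{group-int4}, which the paper proves separately as Theorem~\ref{thm:group-pol}(1) (via Gaussian regularization, Fourier inversion and the Harish-Chandra--Itzykson--Zuber integral); your ``approximate $\delta_{y_b}$ by narrow bumps or invoke the group integral directly'' is the same regularization in different clothing. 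For the descent to orders $m<n$ the paper proves Lemma~\ref{lemma:PFF}: an inductive cofactor expansion in the last row and column that uses only the existence of a subsequence $z_m\to\infty$ along which an integrable auxiliary function tends to $0$. Your simultaneous degeneration $R_j\to\infty$ needs the stronger fact $\omega(x-R)\to 0$, which integrability alone does not give; the Gaussian pre-smoothing you propose does repair this (the mollified weight lies in $C_0$, and Corollary~\ref{cor:pol-conv}(a) keeps you inside the class), but the paper's subsequence trick avoids the detour.

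The one step that is genuinely thinner than it should be is the ``if'' direction of (1). The paper cites Karlin, Pickrell, Olshanski--Vershik and Faraut for the inequality $\Delta_n(x)\det[(-\partial)^{b-1}\omega(x_c)]\ge 0$; you replace the citation by a confluent limit. Two caveats. First, the Taylor expansion of $\omega(a_b-y_c)$ to all orders presumes analyticity; the identity should instead be obtained by column operations with divided differences, which uses only the first $n-1$ derivatives. Second, and more seriously, the limit of the $(n-1)$-st divided differences of $y\mapsto\omega(a_b-y)$ as the nodes coalesce at $0$ equals $(-1)^{n-1}\omega^{(n-1)}(a_b)/(n-1)!$ only if $\omega^{(n-1)}$ is \emph{continuous} there (the mean value theorem gives the pre-limit as $\pm\omega^{(n-1)}(\xi)/(n-1)!$ with $\xi\to a_b$, and a merely existing top derivative can oscillate, cf.\ $x^2\sin(1/x)$), whereas $L^1_{H_2}(\myreal)$ only demands $(n-1)$-fold differentiability. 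You must either mollify here as well, or invoke the automatic regularity of P\'olya frequency functions of order $n$ --- at which point you are back to the sources the paper cites. Finally, non-negativity of the signed density is not quite enough: you also need $C_n[\omega]\neq 0$, which follows from $\fourier\omega(0)>0$ (resp.\ $\mellin\omega(j)>0$ for part (2)) as in Remark~\ref{rm:3}; this should be said explicitly.
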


Thus, P\'olya ensembles on $H_2$ or on $G$ are closely related to P\'olya frequency functions for which the associated function $\omega$ is an element of $L_{H_2}^1(\myreal)$ or $L_{G}^1(\myreal_+)$, respectively. In fact, in view of Cor.~\ref{cor:mult-trans-pol} below, the function $\omega$ is determined by the corresponding P\'olya ensemble on $H_2$ or on $G$ up to a scalar factor. Thus, the relations just described become bijections if we restrict ourselves to \emph{normalized} P\'olya frequency functions (i.e., P\'olya frequency functions of Lebesgue integral $1$) from the respective classes.

The first statement in Theorem \ref{thm:rel-Polya} is closely related to several similar results in \cite{Karlin:1957,Karlin:1968}, although we have not been able to find a result entailing the precise formulation given above. Moreover, the first statement is well-known in the situation where $\omega$ is assumed to define a P\'olya ensemble on $H_2$ for \emph{any} $n \in \mynat$, see \cite{Pickrell:1991,OV:1996,Faraut2006}. The relation claimed in the second statement was implicitly used in a previous work of ours~\cite{KK:2016b} to show that the analytic continuation of matrix product ensembles in the number of Ginibre factors does not always yield a probability density.

For $M \in \{ H_1,H_4,M_\nu \}$, we do not have a complete characterization,
but we~have at least the following partial result.

\begin{theorem}[Sufficient condition for P\'olya ensembles on $M \in \{ H_1,H_4,M_\nu \}$]\label{thm:rel-Polya.b}
Let $M \in \{ H_1,H_4,M_\nu \}$, and let $\tilde\omega\in L^{1}_{H_2}(\mathbb{R})$ be a P\'olya frequency function of order $n$ with $\tilde\omega \ne 0$ and support contained in $[0,\infty[$. Then the function
				\begin{equation}\label{polya-rel.1}
				\omega(x)=\frac{1}{\Gamma[\nu+1]}\int_0^\infty \left(\frac{x}{y}\right)^\nu \exp\left[-\frac{x}{y}\right]\tilde\omega(y)\frac{dy}{y}\in L^{1}_{M}(\mathbb{R}_+)
				\end{equation}
				gives rise to a P\'olya ensemble on $M$.
\end{theorem}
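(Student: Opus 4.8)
The plan is to reduce the claim to the non-negativity of the induced spectral density and then to exploit an intertwining relation between the differential operator defining the $M$-weights and plain differentiation in the integration variable. I would write the kernel of the transform \eqref{polya-rel.1} as
$$K_\nu(x,y)=\frac{1}{\Gamma[\nu+1]}\,\frac{x^\nu}{y^{\nu+1}}\,e^{-x/y},$$
so that $\omega(x)=\int_0^\infty K_\nu(x,y)\,\tilde\omega(y)\,dy$, and abbreviate the operator in \eqref{pol-der-2} by $\mathcal D=x^\nu\partial_x x^{1-\nu}\partial_x=\partial_x x^{\nu+1}\partial_x x^{-\nu}$. A direct computation (done once) gives the key identity $\mathcal D_x K_\nu(x,y)=\partial_y K_\nu(x,y)$; since $\mathcal D_x$ acts in $x$ and $\partial_y$ in $y$, iteration yields $\mathcal D_x^{\,j-1}K_\nu=\partial_y^{\,j-1}K_\nu$. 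Differentiating under the integral sign and integrating by parts $(j-1)$ times in $y$ then produces
$$w_j(x)=\mathcal D^{\,j-1}\omega(x)=\int_0^\infty K_\nu(x,y)\,\tilde w_j(y)\,dy,\qquad \tilde w_j(y):=\Bigl(-\tfrac{\partial}{\partial y}\Bigr)^{j-1}\tilde\omega(y),$$
where $\tilde w_1,\dots,\tilde w_n$ are exactly the weights of the P\'olya ensemble on $H_2$ attached to $\tilde\omega$. The boundary terms vanish because $K_\nu$ and all its $y$-derivatives decay super-exponentially as $y\to 0^+$ (through the factor $e^{-x/y}$ at fixed $x>0$) and polynomially as $y\to\infty$, while the derivatives of $\tilde\omega$ up to order $n-2$ tend to $0$ at infinity by the integrability built into $L^1_{H_2}(\myreal)$.

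With this representation the positivity becomes transparent. Applying the Andr\'eief (Cauchy--Binet) identity to $\det[w_b(a_c)]_{b,c=1}^n$ with the two families $\tilde w_b$ and $K_\nu(a_c,\cdot)$ gives
$$\Delta_n(a)\det[w_b(a_c)]_{b,c}=\frac{1}{n!}\int_{(0,\infty)^n}\det[\tilde w_b(y_k)]_{b,k}\;\Delta_n(a)\det[K_\nu(a_c,y_k)]_{c,k}\,d^ny,$$
whose integrand I would factor, inserting $\Delta_n(y)/\Delta_n(y)$ twice, as
$$\frac{\bigl(\Delta_n(y)\det[\tilde w_b(y_k)]\bigr)\,\bigl(\Delta_n(y)\,\Delta_n(a)\det[K_\nu(a_c,y_k)]\bigr)}{\Delta_n(y)^2}.$$
The first bracket is non-negative on $(0,\infty)^n$ because, by Theorem~\ref{thm:rel-Polya}(1), the P\'olya frequency function $\tilde\omega\in L^1_{H_2}(\myreal)$ of order $n$ induces a P\'olya ensemble on $H_2$ whose spectral density is a positive multiple of it. The second bracket is non-negative because, pulling the positive prefactors $a_c^\nu/\Gamma[\nu+1]$ and $y_k^{-\nu-1}$ out of the determinant, it reduces to $\Delta_n(a)\Delta_n(y)\det[e^{-a_c/y_k}]$, and the exponential kernel $(a,y)\mapsto e^{-a/y}$ is strictly totally positive (equivalently $\det[e^{-a_i/y_j}]>0$ whenever $0<a_1<\dots<a_n$ and $0<y_1<\dots<y_n$, a classical consequence of the strict total positivity of $e^{xy}$). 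Hence the integrand is non-negative almost everywhere, so $\Delta_n(a)\det[w_b(a_c)]\ge 0$; together with the positive constant $C_n[w]$ this makes the density \eqref{jpdf-pol-ens} for $w_1,\dots,w_n$ non-negative, and since $\tilde\omega\neq0$ it is not identically zero and normalizes to a P\'olya ensemble on $M$. The argument is uniform in $\nu\in\mynat_0\cup\{\pm\tfrac12\}$.

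It remains to verify the claimed membership $\omega\in L^1_M(\myreal_+)$, which I expect to be the main technical obstacle. Integrability and $2(n-1)$-fold differentiability of $\omega$, together with $\mathcal D^{\,j}\omega\in L^1_{[1,n]}(\myreal_+)$ for $j=0,\dots,n-1$, I would read off the representation $\mathcal D^{\,j}\omega(x)=\int_0^\infty K_\nu(x,y)\tilde w_{j+1}(y)\,dy$ via Fubini, using the explicit and uniformly controllable $x$-moments of $K_\nu$ against the weighted integrability of $\tilde\omega$ and its derivatives. The delicate points are the justification of differentiation under the integral and of the integrations by parts (dominated-convergence estimates resting on the super-exponential decay of $e^{-x/y}$ near $y=0$), and the vanishing boundary conditions
$$\lim_{x\to0}x^{\nu+1}\partial_x x^{-\nu}\bigl(\partial_x x^{\nu+1}\partial_x x^{-\nu}\bigr)^l\omega(x)=0,\qquad l=0,\dots,n-2,$$
which I would obtain by rewriting each such expression, again through the intertwining relation, as $\int_0^\infty(\text{explicit }y\text{-derivative of }K_\nu)\,\tilde\omega(y)\,dy$ and checking that the prefactor $x^{\nu+1}$ annihilates the mild $x\to0$ singularity of the kernel. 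These are careful but routine estimates; the conceptual content of the theorem lies entirely in the intertwining identity $\mathcal D_x K_\nu=\partial_y K_\nu$ together with the two non-negativity inputs above.
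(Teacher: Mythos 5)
Your proposal is correct and follows essentially the same route as the paper: your intertwining identity $\mathcal{D}_x K_\nu=\partial_y K_\nu$ is precisely the paper's Eq.~\eqref{p4.2.1}, your Andr\'eief factorization is the chain of equalities in Eq.~\eqref{4p.2.5} read in reverse, and your two positivity inputs (the non-negative $H_2$-density attached to $\tilde\omega$ and the total positivity of the kernel $e^{-a/y}$) are exactly what the paper extracts from the manifestly non-negative Harish-Chandra--Itzykson--Zuber group integral. The one part you leave as a sketch, the membership $\omega\in L^{1}_{M}(\myreal_+)$ and in particular the $x\to0$ boundary limits, is carried out in the paper via the substitution $y\to x/y$ and a splitting of the integral at $y=x^\gamma$ with $\gamma\in(2/3,1)$, which confirms that the plan you outline does go through.
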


This theorem shows that even P\'olya ensembles on $M$ are closely related to P\'olya frequency functions
with support contained in $\mathbb{R}_+$, in the sense
that the latter give rise to a large number of examples. In particular Eqs.~\eqref{Laplace-Polya.b} and \eqref{Laplace-Polya.d} in combination with Eq.~\eqref{polya-rel.1} yield many ensembles of this kind.
Theorem \ref{thm:rel-Polya.b} can be found by noticing that the Hankel transform of $\omega$ (as in Section \ref{sec:transforms} below) is equal to the Laplace transform of $\tilde{\omega}$. Moreover, Theorem \ref{thm:rel-Polya.b} should be compared with \cite[Corollary~5.2]{Kallenberg:2012}, which provides a related characterization 
for certain \emph{infinite-dimensional} random matrices.

Unfortunately, the construction in Theorem \ref{thm:rel-Polya.b} does not produce all P\'olya en\-sembles on $M$. For example, to recover the Gaussian ensemble on $M$ (see Ex~\ref{ex:PolyaEns}\,(a)), which corresponds to the weight function $\omega(x)=x^\nu e^{-x}$, we would have to choose $\tilde\omega(y)=\Gamma[\nu+1]\delta(y-1)$, which is a distribution and not a function. 
Even worse, the weights $\omega$ arising in Theorem \ref{thm:rel-Polya.b} 
satisfy $\omega(x) > 0$ for all $x > 0$, 
but there also exist examples of P\'olya ensembles on $M$ without this~property:

\begin{example}[Example of a P\'olya ensemble on $M_0$ beyond Theorem \ref{thm:rel-Polya.b}]
Set~$\omega(x) := e^{-\frac{1}{a-x}} \, \pmb{1}_{(0,a)}(x)$,
where $a \in (0,\tfrac14)$.
Then it is straightforward to show that $\omega$
satisfies the conditions of Def.\@ \ref{def:PolyaEns}\,(3)
(including $p_{M_0}(\omega) \ge 0$) with~$n = 2$ and $\nu = 0$, 
and hence defines a P\'olya ensemble on $M_0$.
\end{example}

\pagebreak[2]

We conclude this subsection with some applications of Theorems \ref{thm:rel-Polya} and \ref{thm:rel-Polya.b}:

\begin{examples}[Non-Trivial P\'olya Ensembles]\

\begin{enumerate}[(a)]\label{ex:1}
	\item	
Since the deformed Gumbel density $\omega(x)=\exp[-e^{-x}-\alpha x]$ with $\alpha>0$ 
is a P\'olya frequency function of infinite order \cite{Faraut2006}, it defines a P\'olya ensemble on $H_2$. Moreover, applying Thm.~\ref{thm:rel-Polya}\,(2) to this density, we recover the P\'olya ensemble on $G$ induced by some Laguerre ensemble, compare Ex.~\ref{ex:PolyaEns} (c).
	\item	The combination of the Gaussian density as a P\'olya frequency function (P\'olya ensemble on $H_2$) and Thm.~\ref{thm:rel-Polya}\,(2) yields the log-normal distribution $\omega(x)= x^{-1} \exp[-({\rm ln}\,x-\alpha)^2/(2\sigma^2)]$. The associated P\'olya ensemble on $G$ was already identified as a particular form of a Muttalib-Borodin ensemble~\cite{Muttalib,Borodin,Forrester-Wang} in \cite{KK:2016a,KK:2016b}.
	\item	The function $\omega(x)=\cosh^{-\mu}(x)$ is a P\'olya frequency function
    of infinite order for any real number $\mu > 0$, see~also \cite{Faraut2006} for $\mu=1,2$, 
    and hence gives rise to a P\'olya ensemble on $H_2$. The joint probability density of the eigenvalues 
    associated with $\omega(x)$ takes the form
	\begin{equation}
	\Delta_n(x)\det[(-\partial_b)^{a-1}\omega(x_b)]\propto\Delta_n(x)\Delta_n(e^{2x})
	\prod_{j=1}^{n} \frac{\exp[-(n-1)x_j]}{\cosh^{\mu+n-1}(x_j)}.
	\end{equation}
	Thence, it yields a particular Muttalib-Borodin ensemble~\cite{Muttalib,Borodin,Forrester-Wang}. With the aid of Thm~\ref{thm:rel-Polya}\,(2), one can show that
	for $\mu > n-1$, it corresponds to a particular form of a Cauchy-Lorentz ensemble as a P\'olya ensemble on $G$.
	\item
	The P\'olya ensemble on $H_2$ corresponding to the induced Jacobi ensemble 
	(a P\'olya ensemble on $G$, see Example \ref{ex:PolyaEns}(d)) is associated to the P\'olya frequency function $\omega(x)=\exp[-\alpha x]\sinh^\mu(x)\Theta(x)$ with $\alpha>\mu>n-2$. This function also yields a Muttalib-Borodin ensemble with an exponential function in the second Vandermonde determinant. Moreover, in combination with Eq.~\eqref{polya-rel.1}, we obtain a non-trivial P\'olya ensemble on $M_\nu$ associated with the function
	\begin{equation}
	\hat{\omega}(x)=\int_0^\infty \left(\frac{x}{y}\right)^\nu\exp\left[-\frac{x}{y}-\alpha y\right]\sinh^\mu(y)\frac{dy}{y} \,.
	\end{equation}
	
\pagebreak[2]

	\item	Using Eq.~\eqref{polya-rel.1} with the weight function $\tilde{\omega}$ for the induced Laguerre ensemble on $H_2$, we find that $\omega(x)=x^{(\mu+\nu)/2}K_{\mu-\nu}(2\sqrt{x})$ with $\mu > n - 2$ and $K_j$ the~modified Bessel function of the second kind gives rise to a P\'olya ensemble on $M_\nu$.
\end{enumerate}
\end{examples}

\smallskip

\subsection{Identities Involving Group Integrals} 
\label{sec:groupintegrals}

Another nice feature of P\'olya en\-sembles  is that they satisfy 
interesting identities involving group integrals.

\begin{theorem}[Group Integrals and P\'olya Ensembles]\label{thm:group-pol} \ 
\begin{enumerate}[(1)]
 \item Let $p_{H_2} = \PE_{H_2}(\omega)$. Then
 			\begin{equation}\label{group-int4}
 			\Delta_n(y)\Delta_n(x)\int_{K_2}p_{H_2}(y-kxk^*)d^*k=\frac{1}{n! \, C_{H_2}} \frac{\det[\omega(y_b-x_c)]_{b,c=1,\ldots,n}}{(\fourier \omega(0))^n}
 			\end{equation}
 for almost all $x,y \in D$, with $C_{H_2}$ as in Eq.~\eqref{const}
 and $\fourier \omega$ the Fourier transform as in Eq.~\eqref{F-def} below.
 
 \item Let $p_G = \PE_G(\omega)$. Then
 			\begin{equation}\label{group-int3}
 			\Delta_n(y)\Delta_n(-x^{-1})\int_{{\rm U}(n)}p_G(x^{-1/2}ky^{1/2})d^*k=\frac{1}{n! \, C_G} \frac{\det[\omega(y_bx_c^{-1})]_{b,c=1,\ldots,n}}{\prod_{j=1}^{n} \mellin \omega(j)}
 			\end{equation}
 			for almost all $x,y \in A$, with $x^{1/2},y^{1/2}$ defined componentwise,  $C_G$ as in~Eq.~\eqref{const}
 and $\mellin \omega$ the Mellin transform as in Eq.~\eqref{M-def} below.

 \item For $M \in \{ M_\nu,H_1,H_4 \}$, let $p_M = \PE_M(\omega)$. Then
 			\begin{multline}\label{group-int1}
 			\Delta_n(y)\Delta_n(x)\int_{K}p_{M}(\iota_M(y)-k\iota_M(x)k^*)d^*k\\
 			=\frac{C_{M(1)}^n}{n! \, C_M} \det\bigg[\int_{K(1)}p_{M(1)}(\iota_{M(1)}(y_b)-k\iota_{M(1)}(x_c)k^*)d^*k\bigg]_{b,c=1,\ldots,n}
 			\end{multline}
 			for almost all $y,x\in A$, where $C_M$ is as in Eq.~\eqref{const},
 			$M(1)$, $K(1)$ and $C_{M(1)}$ are the matrix space, the matrix group
 			and the constant for $n=1$, and $p_{M(1)} = \PE_{M(1)}(\omega)$ 
 			$(${with the same weight function $\omega$}$)$.
\end{enumerate}
\end{theorem}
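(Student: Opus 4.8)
The plan is to treat the three parts simultaneously, exploiting that in each case the left-hand side is an orbital integral that is diagonalized by the harmonic-analysis transform attached to the relevant symmetric space: the matrix Fourier transform \eqref{F-def} for $H_2$, the spherical transform for $G$, and the matrix Hankel transform \eqref{S2-def} for $M\in\{M_\nu,H_1,H_4\}$. I would first recast the statement at the level of spectral densities. For fixed spectral data $x$, the quantity $I(y,x):=\int_{K}p_{M}(\iota_M(y)-k\,\iota_M(x)\,k^*)\,d^*k$ is the $K$-invariant matrix density at $\iota_M(y)$ of the sum $X+k\iota_M(x)k^*$, where $X\sim p_M$ and $k$ is Haar-distributed. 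Using the correspondence \eqref{I-M} between matrix densities and induced spectral densities at dimensions $n$ and $1$, the asserted identity \eqref{group-int1} is equivalent to the statement that the induced spectral density of $X+k\iota_M(x)k^*$ equals
\[
 \frac{1}{n!}\,\frac{\Delta_n(y)}{\Delta_n(x)}\,\det\bigl[f^{(1)}(y_b;x_c)\bigr]_{b,c=1,\ldots,n},
\]
where $f^{(1)}(\cdot\,;x_c)$ denotes the corresponding one-dimensional spectral density; the powers $(\det y)^\nu$ and the constants $C_M,C_{M(1)}$ bookkept in \eqref{const}--\eqref{const*} cancel against those in the $1\times1$ entries, leaving the prefactor $C_{M(1)}^n/(n!\,C_M)$. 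This reformulated identity is what I would prove.

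Next I would pass to the transform side. Expanding $p_M$ against the spherical functions $\Phi_s$ of the space and using that the orbital average over $K$ is diagonalized — so that the transform of $I(\cdot\,,x)$ equals the transform of $p_M$ multiplied by $\Phi_s$ evaluated at $\iota_M(x)$ — reduces everything to two inputs. The first is Corollary~\ref{cor:mult-trans-pol}: because $p_M=\PE_M(\omega)$ is a P\'olya (not merely polynomial) ensemble, its transform factorizes into a product $\prod_c T\omega(s_c)$ of univariate transforms of the single weight $\omega$. The second is the explicit group integral giving the spherical function as a ratio of a determinant of univariate kernels to Vandermonde determinants — the Harish-Chandra--Itzykson--Zuber integral for $H_2$, the Gelfand--Na\u{\i}mark integral for $G$, and the Berezin--Karpelevich integral for $M_\nu,H_1,H_4$ — of the shape $\det[\kappa(s_b,x_c)]/(\Delta_n(s)\Delta_n(x))$. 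Absorbing each factor $T\omega(s_b)$ into the $b$-th row by multilinearity, the transform of $I(\cdot\,,x)$ takes the determinantal form
\[
 \frac{\mathrm{const}}{\Delta_n(s)\,\Delta_n(x)}\,\det\bigl[\,T\omega(s_b)\,\kappa(s_b,x_c)\,\bigr]_{b,c=1,\ldots,n}.
\]

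It remains to invert the transform in the variable $y$. A function of the determinant-over-Vandermonde shape $\Delta_n(s)^{-1}\det[\Phi_c(s_b)]$ is inverted term by term, by Theorem~\ref{thm:mult-trans-pol}, into $\Delta_n(y)^{-1}\det[(T^{-1}\Phi_c)(y_b)]$ up to an explicit constant and a factor $n!$. Applying this with $\Phi_c(s)=T\omega(s)\,\kappa(s,x_c)$ and observing that $T^{-1}[\,T\omega\cdot\kappa(\cdot\,,x_c)\,](y_b)$ is, by the very same computation performed at $n=1$, precisely the univariate orbital integral $\int_{K(1)}p_{M(1)}(\iota_{M(1)}(y_b)-k\iota_{M(1)}(x_c)k^*)\,d^*k$, I would obtain the determinant on the right-hand side of \eqref{group-int1}, together with the prefactor after tracking \eqref{const}--\eqref{const*}. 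For parts (1) and (2) the univariate kernel $\kappa$ is an exponential, respectively a power, so the $n=1$ integral collapses to $\omega(y_b-x_c)$, respectively $\omega(y_bx_c^{-1})$, and the normalization reduces to $(\fourier\omega(0))^{-n}$, respectively $\bigl(\prod_{j}\mellin\omega(j)\bigr)^{-1}$, recovering \eqref{group-int4} and \eqref{group-int3}.

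The main obstacle is the input for $M\in\{M_\nu,H_1,H_4\}$ together with the analytic justification of the transform steps. One must verify that the orbital average is genuinely diagonalized by the matrix Hankel transform with its Bessel-type kernel (the Berezin--Karpelevich integral), and that the termwise inversion above is legitimate. Here the differentiability, integrability and boundary conditions built into $L^1_M(\myreal_+)$ in Definition~\ref{def:PolyaEns}\,(3) — in particular the vanishing conditions $\lim_{x\to0}\dots=0$ — are exactly what make $T\omega$ well defined and force the boundary terms in the relevant integrations by parts to drop out, so that the factorization of Corollary~\ref{cor:mult-trans-pol} holds without correction. The accompanying bookkeeping of Vandermonde factors and normalization constants, though routine, is the other place demanding care.
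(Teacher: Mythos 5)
Your strategy coincides with the paper's: express the orbital integral through the relevant multivariate transform, use the factorization of Corollary~\ref{cor:mult-trans-pol} for the P\'olya ensemble together with the explicit group integrals (Harish-Chandra--Itzykson--Zuber, Gelfand--Na\u{\i}mark, Berezin--Karpelevich), collapse everything by Andr\'eief/multilinearity to a determinant of univariate integrals, and identify those entries with the $n=1$ orbital integrals. The constant bookkeeping (which the paper short-circuits by evaluating both sides on the Gaussian ensemble of Example~\ref{ex:PolyaEns}\,(a)) is indeed routine.

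The one genuine gap is the step you yourself flag as ``the main obstacle'': the legitimacy of the pointwise inversion. The conditions built into $L^1_M(\myreal_+)$ guarantee that the \emph{forward} transform factorizes, but they do not guarantee that $\hankel_\nu p_A(s)=\prod_j\hankel_\nu\omega(s_j)$ is integrable against $(\det s)^\nu\,\Delta_n^2(s)\,ds$ --- for $\omega\in L^1$ the univariate transform is merely bounded, with no decay imposed at infinity --- so the inverse-transform representation of $p_M$ on which your whole chain rests need not converge. The paper's device is to first convolve $p_M$ with the Gaussian ensemble $q_{M,\varepsilon}$, whose transform contributes a factor $\prod_j e^{-\varepsilon s_j}$ that makes the inversion integral absolutely convergent; the argument is carried out for $p_{M,\varepsilon}$, and the identity for $p_M$ itself is recovered by letting $\varepsilon\to0$, using $L^1$-convergence of the convolution, passing to an a.e.-convergent subsequence in $x$ for fixed $y$, and invoking continuity of the $n=1$ entries (available since $\omega$ is differentiable for $n>1$). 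This limiting argument is also what produces the ``for almost all $x,y$'' qualifier in the statement; without some such regularization your termwise inversion would fail for weights whose transforms decay too slowly.
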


\emph{Remark.} By the invariance of the matrix densities, 
the above identities continue to hold if we replace the diagonal matrices $x$ and $y$
inside the group integrals by general matrices $x$ and $y$ in $H_2$, $G$ or $M$, respectively,
with the same eigenvalues or squared singular values.

Theorem \ref{thm:group-pol} will be proved in Section \ref{sec:groupintegrals-proof},
and also plays an important role in the proofs of Theorems~\ref{thm:rel-Polya} and~\ref{thm:rel-Polya.b}.

The identities are far from trivial in quite a few cases, see the following examples.

\begin{examples}[Some Group Integrals]\label{ex:2}\
For a differentiable function $F:B\to\mathbb{R}$ with first derivative $F'(x)\neq0$ for all $x\in B\subset\mathbb{R}$, we define the abbreviation
\begin{multline}
\widetilde{F}(y) := 
\sqrt{\frac{1}{\det F'(y)} \det \frac{F(y) \otimes \eins_n - \eins_n \otimes F(y)}{y \otimes \eins_n - \eins_n \otimes y}} \\
:=
\sqrt{\prod_{j < k} \frac{(F(a_k) - F(a_j))^2}{(a_k - a_j)^2}}
=
\frac{\Delta_n(F(a))}{\Delta_n(a)}
\end{multline}
for all $y\in H_2$ with eigenvalues $a_1,\hdots,a_n \in B$.

\begin{enumerate}[(a)]
	\item	The P\'olya ensemble on $G$ defined by the weight $\omega(x)= \exp[-({\rm ln}\,x-\alpha)^2/(2\sigma^2)]$ (a Muttalib-Borodin ensemble) has the matrix density
				\begin{equation}
				p_G(g)\propto \widetilde{\rm ln}(gg^*)\det (gg^*)^{\alpha/\sigma^2}\exp[-\tr({\rm ln}\,gg^*)^2/(2\sigma^2)]
				\end{equation}
				and satisfies the identity
				\begin{multline}
				\Delta_n(y)\Delta_n(x^{-1})\int_{{\rm U}(n)}p_G(x^{-1/2}ky^{1/2})d^*k \\ \propto\det\left[\exp\left[-({\rm ln}\,y_b-{\rm ln}\,x_c-\alpha)^2)/(2\sigma^2)\right]\right]_{b,c=1,\ldots,n}.
				\end{multline}
	\item  The P\'olya ensemble on $H_2$ defined by the weight $\omega(x)= \cosh^{-\mu}(x)$ 
	(another Muttalib-Borodin ensemble) has the matrix density
				\begin{equation}
				p_{H_2}(y)\propto \widetilde{\rm exp}(2y)\frac{\exp[-(n-1)\tr y]}{\det[\cosh^{\mu+n-1}y]}
				\end{equation}
				(where the matrix in the determinant is defined by spectral calculus)
				and satisfies the identity
 			\begin{equation}
 			\Delta_n(y)\Delta_n(x)\int_{K_2}p_{H_2}(y-kxk^*)d^*k\propto\det\left[\cosh^{-\mu}(y_b-x_c)\right]_{b,c=1,\ldots,n}.
 			\end{equation}
\end{enumerate}
\end{examples}

\smallskip

% ****************************************************************************************************

\section{Multivariate Transforms}
\label{sec:transforms}

To motivate our approach, let us begin with an exposition of the Hankel transform
tailored to our needs. Fix $\nu \in \mynat_0$. Then, for $f \in L^1(\mycmplx^{1+\nu})$, 
its \emph{Fourier~trans\-form} is~defined~by 
\begin{align}
\label{fourier0}
(\fourier f)(y) = \int_{\mycmplx^{1+\nu}} f(x) e^{\imath(x^*y + y^*x)} \, dx \,, \qquad y \in \mathbb{C}^{1+\nu} \,.
\end{align}
Now suppose that $f$ is additionally unitarily invariant, 
i.e.\@ $f(Ux) = f(x)$ for any $U \in \mathrm{U}(1+\nu)$
or (equivalently) $f(x) = F(\|x\|)$ with $\|x\|$ the Euclidean norm.
Then $\fourier f$ is also unitarily invariant,
and it is natural to express everything in terms of 
the squared radii $r := \|x\|_2^2$ and $s := \|y\|_2^2$.
To this end, set
\begin{align}
\label{radialdensity}
\tilde{f}(r) := \frac{\pi^{\nu+1}}{\Gamma(\nu+1)} f(\sqrt{r} e_1) r^{\nu} \qquad (r > 0) \,,
\end{align}
which is the induced density of the squared radius $r := \|x\|^2$
in case $f$ is a prob\-ability density.
By unitary invariance, it suffices to compute $(\fourier f)(y)$
for $y = \sqrt{s} e_1$, with $e_1 := (1,0,\hdots,0) \in \mycmplx^{1+\nu}$.
After a moderate calculation, one finds that
\begin{align}
\label{hankel0}
(\fourier f)(\sqrt{s} e_1) = \Gamma(\nu+1) \int_0^\infty \tilde{f}(r) \frac{J_\nu(2\sqrt{rs})}{(rs)^{\nu/2}} \, dr =: (\hankel_\nu \tilde{f})(s) \,, 
\end{align}
where $J_\nu$ is the Bessel function of parameter $\nu$ and $\hankel_\nu$ is (a version of) the Hankel transform.
Let us note that the Hankel transform is usually defined a bit differently in the literature,
so that it becomes an involution.
Also, let us note that if $f,g \in L^1(\mycmplx^{1+\nu})$ are unitarily invariant,
then $f \ast g \in L^1(\mycmplx^{1+\nu})$ is also unitarily invariant,
and the well-known relation
$
\fourier (f \ast g) = \fourier f \cdot \fourier g
$
for the Fourier transform translates into the relation
$
\hankel_\nu (\widetilde{f \ast g}) = \hankel_\nu \tilde{f} \cdot \hankel_\nu \tilde{g}
$
for the Hankel transform.

Indeed, in view of the identification $\mycmplx^{1+\nu} \simeq {\rm Mat}_{\mycmplx}(1,1+\nu) \simeq M_\nu(1) =: M$,
the~radial density $\tilde{f}$ is nothing else than the spectral density $\mathcal{I}_{M}(f)$
introduced in Sub\-section \ref{sec:matrixdensities}, and the relation for the Hankel transform 
just derived takes the form
\begin{align}
\label{hankel1}
\hankel_\nu (f_A \ast_\nu g_A) = (\hankel_\nu f_A) \cdot (\hankel_\nu g_A) \,,
\end{align}
with $f_A$, $g_A$ and $\ast_\nu$ as in Eq.~\eqref{conv-M}. By similar reasoning, Eq.~\eqref{hankel1}
also holds for the~matrix spaces $H_1$ and $H_4$, again with $n = 1$ and the parameter $\nu$
as in \eqref{I-M}.

We will now develop a similar approach for the matrix spaces $M_\nu$ listed in Sub\-section \ref{sec:matrixspaces},
where it seems natural to express $K$-invariant functions in terms of their eigenvalues 
or squared singular values, and hence in terms of the induced spectral densities
as introduced in Subsection \ref{sec:matrixdensities}.

For the Hermitian matrix spaces $M \in \{ H_1,H_2,H_4,M_\nu \}$ and $f_M \in L^1(M)$, 
the~\emph{Fourier transform} is given by
\begin{align}
\label{fourier1}
(\fourier f_M)(x) := \int_M f_M(y) \exp(\imath \tr xy) \, dy \,, \qquad x \in M  \,.
\end{align}
When $f_M$ is additionally $K$-invariant, then $\fourier f_M$ is also $K$-invariant.
Suppose that $s=\text{diag}(s_1,\ldots,s_n)$ $\in\mathbb{C}^{n \times n}$ with $s_i\neq s_j$ for $i \ne j$
such that the respective integrals exist in the Lebesgue sense.
Then the Fourier transform simplifies to 
\begin{align}
(\fourier f_{H_2})(s) &= \int_{H_2} f_{H_2}(y)\left(\int_{K_2}  \exp[\imath\tr yksk^*]d^*k\right)dy \nonumber\\
&= \prod_{j=0}^{n-1}j!\int_{D} f_{D}(a) \frac{\det\big[\exp[\imath a_bs_c]\big]_{b,c=1,\ldots,n}}{\Delta_n(\imath s)\Delta_n(a)} da =: (\fourier f_D)(s)
\label{S1-def}
\end{align}
with $f_D := \mathcal{I}_{H_2}(f_{H_2}) \in L^{1,\mathbb{S}}(D)$ for $M = H_2$
and to 
\begin{align}
(\fourier f_M)(\iota_M(s)) &= \int_{M}f_{M}(y)\left(\int_{K}  \exp\left[\imath\tr k^*yk\iota_M(s)\right]d^*k\right)dy \nonumber\\
&= \prod_{j=0}^{n-1}\Gamma[j+\nu+1]j!\int_{A} f_A(a) \frac{\det\big[J_{\nu}(2\sqrt{a_bs_c})/\left(a_bs_c\right)^{\nu/2}\big]_{b,c=1,\ldots,n}}{\Delta_n(- s)\Delta_n(a)}da \nonumber\\
&=: (\hankel_\nu f_A)(s) 
\label{S2-def}
\end{align}
with $f_A := \mathcal{I}_{M}(f_M) \in L^{1,\mathbb{S}}(A)$ for $M \in \{ H_1,H_4,M_\nu \}$.
Here, $\nu$ and $\iota_M(s)$ are defined similarly as in \eqref{I-M}.
Furthermore, for the space $G$ and $f_G \in L^{1,K}(G)$, we~may consider the \emph{spherical transform} defined by
\begin{align}
(\mathcal{S} f_G)(s)&=\int_{G}f_G(g)\bigg(\int_{{\rm{U}}(n)}  \prod_{j=1}^n \det (\Pi_{j,n} kgg^*k^* \Pi_{j,n}^*)^{s_j-s_{j+1}-1} d^*k\bigg)dg \nonumber\\
 			&=\prod_{j=0}^{n-1}j!\int_{A} f_A(a) \frac{\det\big[ a_b^{s_c-(n+1)/2} \big]_{b,c=1,\ldots,n}}{\Delta_n(s) \Delta_n(a)} \, da =: (\mellin f_A)(s),
\label{S3-def}
\end{align}
where $s_{n+1} := \frac{n-1}{2}$ and $f_A := \mathcal{I}_{G}(f_G) \in L^{1,\mathbb{S}}(A)$.

To obtain the simplifications in Eqs.~\eqref{S1-def}, \eqref{S2-def}, and \eqref{S3-def},
we have used the Harish-Chandra-Itzykson-Zuber integral~\cite{HC,IZ} for $K_2=\U(n)$,
\begin{equation}\label{Harish-Chan}
\int_{K_2} \exp[\imath\tr aksk^*]d^*k=\prod_{j=0}^{n-1}j!\frac{\det[\exp[\imath a_bs_c]]_{b,c=1,\ldots,n}}{\Delta_n(\imath s)\Delta_n(a)},
\end{equation}
the Harish-Chandra integral \cite{HC} for $K=K_1,K_4$ 
and the Berezin-Karpelevich integral~\cite{BK,GW} for $K=\hat{K}_\nu$,
\begin{equation}\label{Berezin-Karpelevich}
\int_{K} \exp\left[\imath\tr k^*ak\iota_{M}(s)\right]d^*k=\prod_{j=0}^{n-1}\Gamma[j+\nu+1]j!\frac{\det\left[J_{\nu}(2\sqrt{a_bs_c})/\left(a_bs_c\right)^{\nu/2}\right]_{b,c=1,\ldots,n}}{\Delta_n(- s)\Delta_n(a)},
\end{equation}
 and the Gelfand-Na\u{\i}mark integral~\cite{GelNai} for ${\rm U}(n)$,
\begin{equation}\label{Gelfand-Naimark}
\int_{{\rm U}(n)}  \prod_{j=1}^n \det (\Pi_{j,n} kak^* \Pi_{j,n}^*)^{s_j-s_{j+1}-1} d^*k=\prod_{j=0}^{n-1}j! \frac{\det[a_b^{s_c-(n+1)/2}]_{b,c=1,\ldots,n}}{\Delta_n(s) \Delta_n(a)},
\end{equation}
with $s_{n+1} := \frac{n-1}{2}$ as above.
Recall that we work with the induced spectral densities, so that the Jacobians resulting from the diagonalization of the matrices are absorbed in $f_D$ and $f_A$, respectively.

In view of the preceding results, we make the following definition:

\begin{definition}[Multivariate Transforms]\label{def:multi-trans}
The induced transforms in \eqref{S1-def}, \eqref{S2-def} and \eqref{S3-def}
are called the \emph{multivariate Fourier transform}, the \emph{multivariate Hankel transform}
and the \emph{multivariate Mellin transform}, respectively.
\end{definition}

The multivariate transforms are normalized in such a way that
\begin{equation}\label{normalization-def}
\fourier f_D(0)=\int_{H_2}f_{H_2}(y)dy,\ \hankel_{\nu}f_A(0)=\int_{M}f_M(y)dy,\ \mellin f_A(s^{(0)})=\int_{G}f_G(g)dg,
\end{equation}
where the functions are as in \eqref{S1-def}, \eqref{S2-def} and \eqref{S3-def}
and $s_j^{(0)} := (j-1) + (n+1)/2$, \linebreak $j=1,\hdots,n$.
Our motivation for calling the multivariate transforms $\fourier$, $\mellin$ and~$\hankel_\nu$ is that,
for~$n=1$, they reduce to the univariate Fourier, Hankel and Mellin trans\-form defined by 
\begin{align}
\label{F-def}
\fourier f(s)=&\int_{-\infty}^\infty f(x)\exp[\imath xs]dx, 
&& f \in L^1(\myreal),
\\ 			
\label{H-def}
\hankel_\nu f(s)=&\Gamma[\nu+1]\int_{0}^\infty f(x) \frac{J_\nu(2\sqrt{xs})}{(sx)^{\nu/2}} dx,
&& f\in L^1(\mathbb{R}_+)\,,
\\
\label{M-def}
\mellin f(s)=&\int_{0}^\infty f(x) x^{s-1}dx,
&& f\in L^1(\mathbb{R}_+),
\end{align}
respectively. Note that Eq.~\eqref{H-def} coincides with Eq.~\eqref{hankel0}.

Also note that, by Eqs. \eqref{S1-def} and \eqref{S2-def}, the induced transforms $\fourier$ and $\hankel_{\nu}$ are essentially the ordinary Fourier transform \eqref{fourier1} restricted to the matrices $s$ and $\iota_M(s)$ with $s$ diagonal, which is sufficient by $K$-invariance. Thus, in particular, they~inherit the injectivity of the ordinary Fourier transform. The transform $\mellin$ in \eqref{S3-def} is also injective on $L^{1,\mathbb{S}}(A)$, but for less simple reasons, see \eg \cite{Helgason3,KK:2016a}. Finally, let us note that all the transforms in Def.~\ref{def:multi-trans} may be inverted fairly explicitly (possibly after appropriate regularization) either as a consequence of the Fourier inversion formula or by the inversion formula for the spherical transform \cite{Helgason3,KK:2016a}; see also the proof of Theorem \ref{thm:group-pol} further below.

The multivariate transforms in Def.~\ref{def:multi-trans} have the important property
that they convert the induced convolutions in Eqs.\@ \eqref{conv-G} -- \eqref{conv-M} 
into multiplications. For the Fourier transform and the Hankel transform, this follows
from the corresponding property of the Fourier transform \eqref{fourier1}.
For the spherical transform, see \eg \cite[Proof of Lemma IV.3.2]{Helgason3}. 

\begin{theorem}[Multiplication Theorems for the Convolutions]\label{thm:conv-fact}\ 

\begin{enumerate}[(1)]
 \item	For $f_D,h_D \in L^{1,\mathbb{S}}(D)$, $\mathcal{F}[f_D \ast h_D]=\mathcal{F}f_D\,\mathcal{F}h_D$.
 \item	For $f_A,h_A \in L^{1,\mathbb{S}}(A)$, $\mathcal{M}[f_A \circledast h_A]=\mathcal{M}f_{A}\,\mathcal{M}h_{A}$.
 \item	For $f_A,h_A \in L^{1,\mathbb{S}}(A)$, $\mathcal{H}_{\nu}[f_A \ast_{\nu} h_A]=\mathcal{H}_{\nu}f_{A}\,\mathcal{H}_{\nu}h_{A}$.
\end{enumerate}
\end{theorem}

Similarly as the univariate transforms, these multiplication theorems come in handy for studying the three matrix convolutions, and often allow for explicit results in special examples. In particular, they are crucial for identifying the subsets of polynomial ensembles which are closed under these convolutions, see Subsection~\ref{sec:PolEns}.

The reason why polynomial ensemble and P\'olya ensembles are so special becomes clear when taking the multivariate transforms~\ref{def:multi-trans}.

\begin{theorem}[Multivariate Transforms of Polynomial Ensembles]\label{thm:mult-trans-pol}\
 \begin{enumerate}[(1)]
 \item See~\cite{KR:2016}.	For $p_{H_2} = \PE_{H_2}(w_1,\hdots,w_n)$
and $p_D^{} := \mathcal{I}_{H_2} (p_{H_2})$,
 			\begin{equation}\label{S1-pol}
 			\mathcal{F}p_D^{}(s)=C_n[w]\left(\prod_{j=1}^{n}j!\right)\frac{\det[\mathcal{F}w_b(s_c)]_{b,c=1,\ldots,n}}{\Delta_n(\imath s)}.
 			\end{equation}
 \item See~\cite{KK:2016b}.	For $p_{G} = \PE_{G}(w_1,\hdots,w_n)$
 and $p_A^{} := \mathcal{I}_{G} (p_{G})$,
 			\begin{equation}\label{S3-pol}
 			\mathcal{M} p_A^{}(s)=C_n[w]\left(\prod_{j=1}^{n}j!\right)\frac{\det[\mathcal{M}w_b(s_c-(n-1)/2)]_{b,c=1,\ldots,n}}{\Delta_n(s)}.
 			\end{equation}
 \item	For $M \in \{ M_\nu,H_1,H_4 \}$, $p_{M} = \PE_{M}(w_1,\hdots,w_n)$
and $p_A^{} := \mathcal{I}_{M} (p_{M})$,
 			\begin{equation}\label{S2-pol}
 			\mathcal{H}_\nu p_{A}^{}(s)=C_n[w]\left(\prod_{j=1}^{n}\frac{j!\Gamma[j+\nu]}{\Gamma[1+\nu]}\right)\frac{\det[\mathcal{H}_\nu w_b(s_c)]_{b,c=1,\ldots,n}}{\Delta_n(-s)}.
 			\end{equation}
 \end{enumerate}
\end{theorem}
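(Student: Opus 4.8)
The plan is to insert the explicit form of the polynomial-ensemble density directly into the definition of the multivariate Hankel transform and then collapse the resulting two-fold determinantal structure by an Andréief-type identity. Recall that $p_A = \mathcal{I}_M(p_M)$ has the form $p_A(a) = C_n[w]\,\Delta_n(a)\,\det[w_b(a_c)]_{b,c=1,\ldots,n}$ by Definition~\ref{def:PolEns}, and that $\mathcal{H}_\nu$ is given by Eq.~\eqref{S2-def}. First I would substitute $f_A = p_A$ into \eqref{S2-def}; the factor $\Delta_n(a)$ carried by the density cancels against the $\Delta_n(a)$ in the denominator of the kernel, leaving
\begin{equation*}
\mathcal{H}_\nu p_A(s) = \frac{C_n[w]\,\prod_{j=0}^{n-1}\Gamma[j+\nu+1]\,j!}{\Delta_n(-s)} \int_{\mathbb{R}_+^n} \det[w_b(a_c)]\,\det\!\Big[\tfrac{J_\nu(2\sqrt{a_b s_c})}{(a_b s_c)^{\nu/2}}\Big]\,da .
\end{equation*}

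Next I would apply the Heine--Andréief identity to the integral over $\mathbb{R}_+^n \simeq A$. Writing the Bessel determinant as $\det[\psi_c(a_b)]_{b,c}$ with $\psi_c(a) := J_\nu(2\sqrt{a s_c})/(a s_c)^{\nu/2}$ (a transpose that leaves the determinant unchanged), so that both determinants carry the integration variables through a single index, the identity turns the integral of a product of two determinants into $n!$ times the determinant of the pairwise integrals:
\begin{equation*}
\int_{\mathbb{R}_+^n} \det[w_b(a_c)]\,\det[\psi_c(a_b)]\,da = n!\,\det\!\Big[\int_0^\infty w_b(a)\,\tfrac{J_\nu(2\sqrt{a s_c})}{(a s_c)^{\nu/2}}\,da\Big]_{b,c}.
\end{equation*}
Comparing with the univariate Hankel transform \eqref{H-def}, each entry equals $\Gamma[\nu+1]^{-1}\,\mathcal{H}_\nu w_b(s_c)$, so the determinant equals $\Gamma[\nu+1]^{-n}\det[\mathcal{H}_\nu w_b(s_c)]_{b,c}$.

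It then remains only to reconcile the constants. Collecting the prefactors gives $\tfrac{n!}{\Gamma[\nu+1]^n}\prod_{j=0}^{n-1}\Gamma[j+\nu+1]\,j!$, and a reindexing --- using $\prod_{j=0}^{n-1}\Gamma[j+\nu+1] = \prod_{j=1}^n\Gamma[j+\nu]$ together with $n!\prod_{j=0}^{n-1}j! = \prod_{j=1}^n j!$ --- rewrites this as $\prod_{j=1}^n \tfrac{j!\,\Gamma[j+\nu]}{\Gamma[1+\nu]}$, exactly the prefactor in the claim, while $\Delta_n(-s)$ survives in the denominator as stated. I would note that this is valid for all admissible $\nu \in \mynat_0 \cup \{\pm\tfrac12\}$, since $\Gamma[j+\nu]$ has no pole there.

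The main obstacle is not the algebra but the analytic justification of the two interchanges of integration implicit above: applying Fubini to pass from the iterated integral defining $\mathcal{H}_\nu$ to the single integral over $\mathbb{R}_+^n$, and the validity of the Andréief identity itself. Here I would use the integrability built into Definition~\ref{def:PolEns}, namely $w_b \in L^1_{[1,n]}(\mathbb{R}_+) \subset L^1(\mathbb{R}_+)$, together with the global boundedness of the kernel $J_\nu(2\sqrt{a s})/(a s)^{\nu/2}$ (which tends to $\Gamma[\nu+1]^{-1}$ as $a \to 0$ and stays $O(1)$ as $a \to \infty$ for every $\nu \ge -\tfrac12$), to dominate the integrand and certify that $\det[w_b(a_c)]\det[\psi_c(a_b)]$ is absolutely integrable over $\mathbb{R}_+^n$; once this is in hand, the Andréief identity follows from expanding both determinants as signed sums over permutations and integrating term by term. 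Parts (1) and (2) of the theorem are proved in exactly the same way, replacing \eqref{S2-def} and \eqref{H-def} by the Fourier kernel \eqref{S1-def}/\eqref{F-def} or the Mellin kernel \eqref{S3-def}/\eqref{M-def}, which is why they are merely cited from \cite{KR:2016,KK:2016b}.
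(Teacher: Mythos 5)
Your proposal is correct and follows essentially the same route as the paper: substitute the determinantal density \eqref{jpdf-pol-ens} into the second line of \eqref{S2-def} (resp.\ \eqref{S1-def}, \eqref{S3-def}), collapse the two determinants with Andr\'eief's integration theorem, and recognize the entries as the univariate transforms \eqref{H-def}, \eqref{F-def}, \eqref{M-def}; your bookkeeping of the constants and the integrability justification (boundedness of the Bessel kernel together with $w_b\in L^1$) are both sound and in fact more explicit than the paper's one-line argument.
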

\begin{proof}
 For all three statements we have to plug Eq.~\eqref{jpdf-pol-ens} into the second lines of Eqs.~\eqref{S1-def}, \eqref{S2-def} and \eqref{S3-def}. The integrals can be performed by applying Andr\'eief's integration theorem~\cite{Andreief} and by identifying the entries in the remaining determinant with the univariate transforms in Eqs.~\eqref{F-def}, \eqref{H-def} and \eqref{M-def}, which concludes the proof.
\end{proof}

\begin{corollary}[Multivariate Transforms of P\'olya Ensembles]\label{cor:mult-trans-pol}\
\begin{enumerate}[(1)]
 \item	    See~\cite{KR:2016}.
 For $p_{H_2} = \PE_{H_2}(\omega)$
and $p_D^{} := \mathcal{I}_{H_2} (p_{H_2})$,
 			\begin{equation}\label{S1-pol-der}
 			\mathcal{F} p_D^{}(s)=\prod_{j=1}^n\frac{\mathcal{F}\omega(s_j)}{\mathcal{F}\omega(0)} \quad \text{and} \quad C_n[\omega]=\prod_{j=1}^{n}\frac{1}{j!\mathcal{F}\omega(0)}.
 			\end{equation}
 \item	    See~\cite{KK:2016b}.
 For $p_{G} = \PE_{G}(\omega)$
and $p_A^{} := \mathcal{I}_{G} (p_{G})$,
			\begin{equation}\label{S3-pol-der}
 			\mathcal{M} p_A^{}(s)=\prod_{j=1}^n\frac{\mathcal{M}\omega(s_j-(n-1)/2)}{\mathcal{M}\omega(j)} \quad \text{and} \quad C_n[\omega]=\prod_{j=1}^{n}\frac{1}{j!\mathcal{M}\omega(j)}.
 			\end{equation}
 \item	
 For $M \in \{ M_\nu, H_1, H_4 \}$, $p_M = \PE_M(\omega)$ 
and $p_A^{} := \mathcal{I}_{M} (p_{M})$,
 			\begin{equation}\label{S2-pol-der}
 			\mathcal{H}_{\nu} p_A^{}(s)=\prod_{j=1}^n\frac{\mathcal{H}_\nu\omega(s_j)}{\mathcal{H}_\nu\omega(0)} \quad \text{and} \quad C_n[\omega]=\prod_{j=1}^{n}\frac{\Gamma[1+\nu]}{j!\Gamma[j+\nu]\mathcal{H}_\nu\omega(0)}.
 			\end{equation}
\end{enumerate}
\end{corollary}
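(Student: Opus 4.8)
The plan is to obtain all three identities by specializing the determinantal formulas of Theorem~\ref{thm:mult-trans-pol} to the situation in which the one-point weights are generated from a single function $\omega$ by the differential operators of Definition~\ref{def:PolyaEns}. For a P\'olya ensemble on $M \in \{ M_\nu,H_1,H_4 \}$ we have $w_b = D_\nu^{\,b-1}\omega$ with $D_\nu := x^\nu\partial_x x^{1-\nu}\partial_x$ as in \eqref{pol-der-2}, and the whole argument rests on a single \emph{operational identity} for the (univariate) Hankel transform \eqref{H-def}, namely
\begin{equation*}
\mathcal{H}_\nu[D_\nu f](s) = -s\,\mathcal{H}_\nu f(s).
\end{equation*}
Granting this, iteration gives $\mathcal{H}_\nu w_b(s) = (-s)^{b-1}\mathcal{H}_\nu\omega(s)$. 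Parts (1) and (2) are the known cases of \cite{KR:2016,KK:2016b}, recovered by the same scheme: the elementary integration-by-parts rules for $-\partial_x$ under $\mathcal{F}$ and for $-x\partial_x$ under $\mathcal{M}$ give $\mathcal{F} w_b(s) = (\imath s)^{b-1}\mathcal{F}\omega(s)$ and $\mathcal{M} w_b(t) = t^{\,b-1}\mathcal{M}\omega(t)$.

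First I would establish the operational identity. Writing $\psi_s(x) := J_\nu(2\sqrt{xs})/(sx)^{\nu/2}$ for the Hankel kernel, I would move $D_\nu$ off $f$ and onto $\psi_s$ by integrating by parts twice. The Lebesgue-formal adjoint of $D_\nu$ is $D_\nu^\ast = x\partial_x^2 + (1+\nu)\partial_x$, and a direct computation from the Bessel equation shows that $\psi_s$ is an eigenfunction of $D_\nu^\ast$ with eigenvalue $-s$, i.e.\ $D_\nu^\ast\psi_s = -s\,\psi_s$. The boundary contributions at $x\to\infty$ vanish by the integrability requirements $D_\nu^{\,j}f\in L^1_{[1,n]}(\myreal_+)$, while those at $x\to 0$ vanish precisely because of the limit conditions built into $L^1_M(\myreal_+)$ in Definition~\ref{def:PolyaEns}\,(3). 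This is the step I expect to be the main obstacle, since it is exactly where those somewhat technical boundary conditions are consumed; for $\nu = \pm\tfrac12$ one may instead use the reduction of $D_\nu$ to $y^{(2\nu-1)/2}\partial_y^2 y^{(1-2\nu)/2}/4$ noted after Definition~\ref{def:PolyaEns}, and for integer $\nu$ one may realize $\mathcal{H}_\nu$ as the radial part of the Fourier transform on $\mycmplx^{1+\nu}$ (as in \eqref{radialdensity}--\eqref{hankel0}), where $D_\nu$ becomes the radial Laplacian and is turned into multiplication by the frequency.

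With the operational identity in hand, I would substitute $\mathcal{H}_\nu w_b(s_c) = (-s_c)^{b-1}\mathcal{H}_\nu\omega(s_c)$ into the determinant of \eqref{S2-pol}. Factoring the common factor $\mathcal{H}_\nu\omega(s_c)$ out of the $c$-th column leaves $\det[(-s_c)^{b-1}]_{b,c} = \Delta_n(-s)$ by \eqref{Vandermonde}, which cancels the $\Delta_n(-s)$ in the denominator of \eqref{S2-pol}. This already gives the fully factorized form
\begin{equation*}
\mathcal{H}_\nu p_A^{}(s) = C_n[\omega]\,\Big(\prod_{j=1}^n \tfrac{j!\,\Gamma[j+\nu]}{\Gamma[1+\nu]}\Big)\prod_{c=1}^n \mathcal{H}_\nu\omega(s_c),
\end{equation*}
with no residual singularity. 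The same computation with $\Delta_n(\imath s)$ and $\Delta_n(s)$ disposes of the denominators in \eqref{S1-pol} and \eqref{S3-pol}; for the Mellin case I would use that the argument shift by $(n-1)/2$ is constant across the row index, so the Vandermonde is unaffected.

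Finally I would pin down the constant by normalization. Evaluating the factorized form at the distinguished point of \eqref{normalization-def} --- namely $s = 0$ for $\mathcal{F}$ and $\mathcal{H}_\nu$, and $s = s^{(0)}$ for $\mathcal{M}$ (where $s_j^{(0)} - (n-1)/2 = j$) --- the left-hand side equals $\int_M f_M = 1$, which yields $C_n[\omega] = \prod_{j=1}^n \Gamma[1+\nu]/(j!\,\Gamma[j+\nu]\,\mathcal{H}_\nu\omega(0))$, together with the analogous constants in \eqref{S1-pol-der} and \eqref{S3-pol-der}. Re-inserting this constant converts the factorized form into the claimed product $\prod_{j=1}^n \mathcal{H}_\nu\omega(s_j)/\mathcal{H}_\nu\omega(0)$, completing part (3); parts (1) and (2) are recovered verbatim by the same three moves.
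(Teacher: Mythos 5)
Your proposal is correct and follows essentially the same route as the paper's proof: establish the operational identities $\mathcal{F}[-\partial_x\omega](s)=\imath s\,\mathcal{F}\omega(s)$, $\mathcal{M}[-x\partial_x\omega](s)=s\,\mathcal{M}\omega(s)$, $\mathcal{H}_\nu[x^\nu\partial_x x^{1-\nu}\partial_x\omega](s)=-s\,\mathcal{H}_\nu\omega(s)$ by integration by parts (with the boundary terms at the origin killed by the limit conditions in Definition~\ref{def:PolyaEns}\,(3)), insert them into the determinants of Theorem~\ref{thm:mult-trans-pol}, pull out the column-common factors so that the resulting Vandermonde cancels the denominator, and fix $C_n[\omega]$ via the normalization~\eqref{normalization-def}. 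The extra detail you supply on the formal adjoint of the Bessel-type operator and the eigenfunction property of the Hankel kernel is a correct elaboration of the step the paper only sketches.
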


\begin{proof}
 For the proof we have to combine Definition \ref{def:PolEns} of the P\'olya ensembles, Theorem~\ref{thm:mult-trans-pol}, and the relations
 \begin{equation}\label{p2.1}
 \begin{split}
  \mathcal{F}\left(\left[-\frac{\partial}{\partial x}\omega(x)\right];s_c\right)=&\imath s_c\mathcal{F}\omega(s_c),\\
  \mathcal{M}\left(\left[-x\frac{\partial}{\partial x}\omega(x)\right];s_c\right)=&s_c \mathcal{M}\omega\left(s_c\right),\\
  \mathcal{H}_\nu\left(\left[x^{\nu}\frac{\partial}{\partial x}\frac{1}{x^{\nu-1}}\frac{\partial}{\partial x}\omega(x)\right];s_c\right)=&{-}s_c\mathcal{H}_\nu\omega(s_c),
 \end{split}
 \end{equation}
 for suitable functions $\omega$. These relations can be proven via integrations by parts. For the Hankel transform $\mathcal{H}_\nu$, one needs that the boundary terms vanish, which follows from our integrability assumptions at infinity and from the limit condition in Def.~\ref{def:PolyaEns}\,(3) at the origin.
 
After inserting Eqs.~\eqref{p2.1} into Eqs.~\eqref{S1-pol} -- \eqref{S2-pol},
we may pull those factors out of the determinants in the numerators which are independent of the index~$b$. This leaves us with Vandermonde determinants $\Delta_n(s)$ multiplied by products of the functions $\mathcal{F}\omega(s_j)$, $\mathcal{M}\omega(s_j-(n-1)/2)$,
and $\mathcal{H}_\nu\omega(s_j)$, respectively. The Vandermonde determinants cancel with those in the denominators. 
Since the densities are~normalized, we may use Eq.~\eqref{normalization-def} to fix the constants $C_n[\omega]$. This finishes the proof.
\end{proof}

\begin{remark}\label{rm:3}
Cor.~\ref{cor:mult-trans-pol} may be generalized to non-normalized and even to signed P\'olya ensembles. Here the respective multivariate transform is still proportional to the product of the univariate transforms, and the corresponding normalizing constants may be \emph{determined} by means of the latter formulas in \eqref{S1-pol-der} -- \eqref{S2-pol-der}.
\end{remark}

%****************************************************************************************************

\section{Proofs of the Main Results}
\label{sec:proofs}

With the help of the multivariate transforms calculated in Theorem~\ref{thm:mult-trans-pol} and Corollary~\ref{cor:mult-trans-pol}, it is easy to prove Theorem~\ref{thm:pol-conv} and Corollary~\ref{cor:pol-conv}:

\begin{proof}[Proof of Theorem~\ref{thm:pol-conv}]
Since the proofs of the three parts are very similar,
we give the proof for part (3) only.
Let $f_M := \PE_M(\omega_1,\hdots,\omega_n)$ and $h_M := \PE_M(\omega)$,
and let $f_A$ and $h_A$ denote the associated induced densities as in \eqref{I-M}.
By $K$-invariance, we may work with the induced transforms and apply Theorem~\ref{thm:conv-fact}:
\begin{align}
  \hankel_\nu(\mathcal{I}_M(f_M \ast h_M))(s)
= \hankel_\nu(f_A \ast_\nu h_A)(s)
= \hankel_\nu f_A(s) \, \hankel_\nu h_A(s) \,.
\end{align}
After using Theorem~\ref{thm:mult-trans-pol} and Corollary~\ref{cor:mult-trans-pol}, we can push the factors from the transform of the P\'olya ensemble into the determinant from the transform of the polynomial ensemble, thereby obtaining
\begin{align}
  \hankel_\nu(\mathcal{I}_M(f_M \ast h_M))(s)
&\propto \frac{\det[\hankel_\nu \omega_b(s_c)]_{b,c=1,\hdots,n}}{\Delta_n(-s)}
 \prod_{j=1}^{n} \hankel_\nu \omega(s_j) \nonumber\\
&= \frac{\det[\hankel_\nu \omega_b(s_c) \, \hankel_\nu \omega(s_c)]_{b,c=1,\hdots,n}}{\Delta_n(-s)} \,.
\end{align}
For the entries of the resulting determinant we can apply Theorem~\ref{thm:conv-fact} for the univariate case $n=1$:
\begin{align}
  \hankel_\nu(\mathcal{I}_M(f_M \ast h_M))(s)
\propto \frac{\det[\hankel_\nu (\omega_b \ast_\nu \omega)(s_c)]_{b,c=1,\hdots,n}}{\Delta_n(-s)} \,.
\end{align}
In~the end we apply the uniqueness theorem by which a $K$-invariant matrix density is determined by its multivariate transform.
\end{proof}

\begin{proof}[Proof of Corollary~\ref{cor:pol-conv}]
The proof works along the same lines as that of Theorem~\ref{thm:pol-conv}, only that we may use Corollary~\ref{cor:mult-trans-pol} for both ensembles and, hence, have no determinant.

Additionally, we need to check that the conditions in Def.~\ref{def:PolyaEns} are preserved under convolution. For brevity, we confine ourselves to a rough outline of the argument for part (3). The integrability and differentiability conditions can be checked using the relations
\begin{equation}
\int_{\myreal_+} (F \ast_\nu G) = \int_{\myreal_+} F \cdot \int_{\myreal_+} G
\end{equation}
and
\begin{equation}
(\partial_x x^{\nu+1} \partial_x x^{-\nu}) (F \ast_\nu G) = (\partial_x x^{\nu+1} \partial_x x^{-\nu} F) \ast_\nu G \,.
\end{equation}
The extra limit condition then follows from the observation that,
given the other conditions,
$\lim_{x \to 0} (x^{\nu+1} \partial_x x^{-\nu} F)(x) = 0$ 
is equivalent to
$\int_0^\infty (\partial_x x^{\nu+1} \partial_x x^{-\nu} F)(x) \, dx$ $= 0$.
\end{proof}

For the proof of  Theorem \ref{thm:rel-Polya}, we need some preparations.
In order to show that an \emph{integrable} function $f$ is a P\'olya frequency function of order $N$, 
it suffices to check the condition~\eqref{Polya-def} for $n = 1$ and $n = N$.

\begin{lemma}[Sufficient Condition for P\'olya Frequency Function of Order $N$] 
\label{lemma:PFF}
Let the function $f : \mathbb{R} \longrightarrow \mathbb{R}$ be integrable
and suppose that the condition~\eqref{Polya-def} holds for $n = 1$ and $n = N$.
Then $f$ is a P\'olya frequency function of order $N$.
\end{lemma}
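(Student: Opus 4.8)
The plan is to reduce to the case of a smooth, rapidly decaying weight by convolving with a Gaussian, establish the intermediate orders in that smooth setting by a degeneration (far-apart) argument, and then remove the smoothing in the limit. Write $\phi_\epsilon(x) = (2\pi\epsilon)^{-1/2} e^{-x^2/(2\epsilon)}$, which by Example~\ref{ex:Polya}(a) is a P\'olya frequency function of every order, and set $f_\epsilon := f * \phi_\epsilon$.

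First I would show that $f_\epsilon$ again satisfies condition~\eqref{Polya-def} for $n = 1$ and $n = N$. This is where Andr\'eief's identity~\cite{Andreief} enters: writing $f_\epsilon(x_b - y_c) = \int f(x_b - z)\,\phi_\epsilon(z - y_c)\,dz$ and applying the continuous Cauchy--Binet formula gives
\[
\det[f_\epsilon(x_b - y_c)]_{b,c=1}^n = \frac{1}{n!}\int_{\mathbb{R}^n}\det[f(x_b - z_j)]_{b,j}\,\det[\phi_\epsilon(z_j - y_c)]_{j,c}\,dz.
\]
For a.e.\ $z$ with distinct entries we have $\Delta_n(x)\Delta_n(z)\det[f(x_b - z_j)]_{b,j} \ge 0$ by the order-$n$ condition for $f$, and $\Delta_n(z)\Delta_n(y)\det[\phi_\epsilon(z_j - y_c)]_{j,c} \ge 0$ since $\phi_\epsilon$ is a P\'olya frequency function; multiplying these and cancelling the factor $\Delta_n(z)^2 \ge 0$ shows that the integrand times $\Delta_n(x)\Delta_n(y)$ is nonnegative, so that integrating yields $\Delta_n(x)\Delta_n(y)\det[f_\epsilon(x_b - y_c)] \ge 0$. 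Taking $n = 1$ and $n = N$, the function $f_\epsilon$ inherits both hypotheses; moreover $f_\epsilon$ is continuous, bounded, tends to $0$ at $\pm\infty$ (as $f \in L^1$ and $\phi_\epsilon \in C_0$), is nonnegative (from the $n=1$ case), and satisfies $\int f_\epsilon = \int f > 0$, so $f_\epsilon(u) > 0$ on a nonempty open set.

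Next I would prove that a nonnegative $g \in C_0(\mathbb{R})$ satisfying \eqref{Polya-def} for some order $m$ also satisfies it for order $m-1$, and run this downward from $m = N$. Fix ordered points $x_1 < \cdots < x_{m-1}$ and $y_1 < \cdots < y_{m-1}$, pick $u$ with $g(u) > 0$, and for a parameter $T > 0$ augment the configuration by $x_m := x_{m-1} + T$ and $y_m := x_{m-1} + T - u$, so that the new diagonal entry equals $g(x_m - y_m) = g(u) > 0$, while the new row entries $g(x_{m-1} + T - y_c)$ and new column entries $g(x_b - x_{m-1} - T + u)$ tend to $0$ as $T \to \infty$. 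The order-$m$ condition gives $\det[g(x_b - y_c)]_{b,c=1}^m \ge 0$ for all large $T$, and as $T \to \infty$ the matrix becomes block-triangular, so the determinant tends to $g(u)\cdot\det[g(x_b - y_c)]_{b,c=1}^{m-1}$; since $g(u) > 0$, the $(m-1)\times(m-1)$ minor is nonnegative. Iterating from $m = N$ down to $m = 2$ shows that $f_\epsilon$ is a P\'olya frequency function of order $N$.

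Finally I would let $\epsilon \to 0$. Since $f_\epsilon \to f$ in $L^1$, along a subsequence $f_\epsilon \to f$ pointwise a.e., so for a.e.\ choice of $x,y$ the determinants $\det[f_\epsilon(x_b - y_c)]$ converge to $\det[f(x_b - y_c)]$, and the nonnegativity of $\Delta_n(x)\Delta_n(y)\det[f_\epsilon(x_b-y_c)]$ passes to the limit for every $n = 1, \ldots, N$, giving \eqref{Polya-def} for all these $n$. I expect the main obstacle to be the degeneration step: it is essential that the padding value be strictly positive and that the off-diagonal blocks genuinely vanish, which is precisely why the preliminary smoothing (guaranteeing $f_\epsilon \in C_0$ and $f_\epsilon > 0$ somewhere) is needed rather than working with the merely integrable $f$ directly.
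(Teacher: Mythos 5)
Your core idea --- append a far-away extra row and column whose diagonal entry is bounded below by a fixed positive value while the off-diagonal entries become negligible, so that the $N\times N$ inequality degenerates to the $(N-1)\times(N-1)$ one --- is exactly the mechanism of the paper's proof, and your composition step (Andr\'eief/Cauchy--Binet against the Gaussian kernel) does show that $f_\epsilon$ inherits the hypotheses and, being in $C_0$, is a P\'olya frequency function of order $N$. The genuine gap is in the last step. Definition~\ref{def:Polya} requires the inequality \eqref{Polya-def} for \emph{all} $x,y\in\myreal^n$, but $L^1$-convergence $f_\epsilon\to f$ only gives pointwise convergence a.e.\ (at Lebesgue points of $f$), so your limiting argument only establishes \eqref{Polya-def} for almost every configuration $(x,y)$. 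For a merely integrable $f$ this cannot be upgraded to every $(x,y)$: the hypotheses of the lemma are pointwise statements about $f$ that $f_\epsilon$ does not see at all. They are nonvacuous even when $f=0$ a.e.\ but not everywhere (e.g.\ $f=\pmb{1}_{\{0\}}$ satisfies \eqref{Polya-def} for every $n$), in which case $f_\epsilon\equiv 0$, your assumption $\int f>0$ fails, and your argument says nothing. So as written you prove a weaker, a.e.\ version of the lemma; this would suffice for the paper's application to continuous weights $\omega$, but not for the lemma as stated.

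The repair --- and the paper's actual route --- is to drop the mollification entirely and run your degeneration step directly on $f$. The point you flag as the ``main obstacle'' is not one: the appended off-diagonal entries need not tend to $0$ as $T\to\infty$; it suffices that they be small along \emph{some} sequence $T_m\to\infty$. Since $h(z):=\sum_j f(x_j-z)+\sum_k f(z+z_*-y_k)$ is a non-negative integrable function (with $z_*$ any point where $f(z_*)>0$, which exists unless $f\equiv0$, when the claim is trivial), it satisfies $\liminf_{z\to\infty}h(z)=0$, so one can choose $z_m\to\infty$ with $K^{1/2}h(z_m)<1/m$, where $K$ bounds the relevant cofactors. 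Expanding the $N\times N$ determinant along the last row and column then gives $f(z_*)\det A\ge \hat D-K\,h(z_m)^2\ge-1/m^2$ for every fixed configuration, whence $\det A\ge0$ everywhere. This closes the a.e.\ gap, handles the degenerate cases uniformly, and collapses your three stages into a single downward induction step.
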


\begin{proof}
By way of induction, it suffices to show that if a non-negative and integrable function $f$ satisfies condition \eqref{Polya-def} for $n = N$, then it also satisfies condition \eqref{Polya-def} for $n = N-1$.
In doing so, we may assume that $f \ne 0$,
because the claim is trivial otherwise.
Thus, there exists a real number $z_*\in\mathbb{R}$ with $f(z_*) > 0$.

We fix $x_1 < \hdots < x_{N-1}$, $y_1 < \hdots < y_{N-1}$.
Then we know that for \emph{any}  $x_N\geq x_{N-1}$ and $y_N\geq y_{N-1}$,
\begin{align}
\label{eq:PFF-1}
\hat{D} := \det \Big[ f(x_j - y_k) \Big]_{j,k=1,\hdots,N} \ge 0 \,,
\end{align}
and we must show that 
\begin{align}
\label{eq:PFF-2}
\det \Big[ f(x_j - y_k) \Big]_{j,k=1,\hdots,N-1} \ge 0 \,.
\end{align}
To this end, we expand the determinant~\eqref{eq:PFF-1} in the last row and the last column,
\begin{align}
\label{eq:PFF-3}
\hat{D} = f(x_N - y_N) \det A + \sum_{j,k=1}^{N-1} (-1)^{j+k-1} f(x_j - y_N) f(x_N - y_k) \det A^{[j:k]}.
\end{align}
Here, $A$ denotes the $(N-1) \times (N-1)$ matrix in Eq.~\eqref{eq:PFF-2}, and $A^{[j:k]}$ denotes the
$(N-2) \times (N-2)$ matrix obtained from $A$ by removing the $j$'th row and the $k$'th column. Note that the matrix $A$ and, hence, the maximum $K := \max_{j,k=1,\hdots,n} |\det A^{[j:k]}|$ do not depend on $x_N$ and $y_N$.

Now, for sufficiently large $z\in\mathbb{R}$, set $x_N := z + z_*$ and $y_N := z$ with $z_*$ as above.
We define the auxiliary function $h(z) := \sum_{j=1}^{N-1} f(x_j - z) + \sum_{k=1}^{N-1} f(z + z_* - y_k)$. The integrability of $f$ carries over to $h$. Hence, there exist real numbers $z_m> \linebreak[1] \max\{x_{N-1}-z_*,y_{N-1}\}$ such that $ K^{1/2} h(z_m) < 1/m$ for all $m \in \mynat$
and $\lim_{m \to \infty} z_m = \infty$. It then follows from Eq.~\eqref{eq:PFF-3} that
\begin{multline}
\label{eq:PFF-4}
f(z_*) \det A \ge \hat{D} - K\sum_{j,k=1}^{N-1} f(x_j - z_m) f(z_m + z_* - y_k)\\ \ge \hat{D} - K(h(z_m))^2 \geq \hat{D} - \frac{1}{m^2}
\end{multline}
for all $m \in \mynat$. Since $\hat{D} \ge 0$ and $f(z_*) > 0$, this implies $\det A \ge 0$,
and the proof is~complete.
\end{proof}

To the best of our knowledge Lemma~\ref{lemma:PFF} is a new result. It is helpful when checking whether a density is a P\'olya frequency function, especially when proving some statements of our Theorem~\ref{thm:rel-Polya}.

\enlargethispage{1.0\baselineskip}

\subsection{Proof of Theorem \ref{thm:group-pol}}\label{sec:groupintegrals-proof}
Since the proofs of parts (1) -- (3) are very similar, 
we provide the~full details for the most complicated part (3) only,
and confine ourselves to rough sketches for parts (1) and (2).

(3) For any $\varepsilon > 0$, let $q_{M,\varepsilon}$ be the Gaussian density on $M$ as in Example \ref{ex:PolyaEns}\,(a), and set $p_{M,\varepsilon} := p_M \ast q_{M,\varepsilon}$. Then, by basic properties of the convolution, we~have
\begin{align}
\label{p3.1.02a}
\lim_{\epsilon\to0} \int_M \Big| (p_M - p_{M,\epsilon})(\iota_M(y) - \tilde{x}) \Big| \, d\tilde{x} = 0
\end{align}
for any fixed $y \in A$, with $\iota_M(y)$ as in Eq.~\eqref{I-M}. By the change of variables $\tilde{x} \to k \iota_M(x) k^*$, where $k \in K$ and $x \in A$, it follows that
\begin{align}
\label{p3.1.02c}
\lim_{\epsilon\to0} \int_A \int_{K} \Big| (p_M - p_{M,\epsilon})(\iota_M(y)-k\iota_M(x)k^*) \Big|  \, d^*k \, (\det x)^\nu \, |\Delta(x)|^2 \, dx = 0 \,.
\end{align}
Thus, for fixed $y \in A$, since convergence in $L^1$ implies almost sure convergence along some subsequence, we may find a sequence $(\epsilon_m)_{m \in \mynat}$ of positive numbers such~that $\lim_{m \to \infty} \epsilon_m = 0$ and
\begin{align}
\label{p3.1.03}
\lim_{m \to \infty} \int_{K} \Big| (p_M - p_{M,\epsilon_m})(\iota_M(y)-k\iota_M(x)k^*) \Big| \, d^*k = 0
\end{align}
for almost all $x \in A$. 

\pagebreak[2]

We now prove \eqref{group-int1} with $p_M$ replaced by $p_{M,\varepsilon}$.
Fix $\varepsilon > 0$ and $x,y \in A$,
and set $p_A := \mathcal{I} p_M$ and $p_{A,\varepsilon} := \mathcal{I} p_{M,\varepsilon}$.
Then
$(\hankel_{\nu} p_{A,\varepsilon})(s) = (\hankel_{\nu} p_A)(s) \cdot \prod_{j=1}^{n} e^{-\varepsilon s_j}$
for all $s \in \myreal^n$,
where the first factor is bounded and the second factor is integrable 
even after multi\-plication by $|\Delta_n(s)|_2^2$.
Thus, multivariate Fourier inversion (recall from~Eq. \eqref{S2-def}
that the Hankel transform arises from the Fourier transform \eqref{fourier1} 
in the matrix space $M$) yields
\begin{equation}
\label{p3.1.01}
p_{M,\epsilon}(z) = \hat{C}
  \int_{A} (\hankel_{\nu} p_{A,\varepsilon})(s) \left(\int_{K} \exp[-\imath \tr z\tilde{k} \iota_M(s)\tilde{k}^*]d^*\tilde{k}\right) (\det s)^\nu \, \Delta_n^2(s) \, ds \,,
\end{equation}
where $z \in M$ and $\hat{C}$ is the normalizing constant in the inverse Fourier transform.
Replacing $z$ with $\iota_M(y) - k\iota_M(x)k^*$, 
integrating with respect to $k$
and ex\-changing the order of integration, it follows that
\begin{multline}
\label{p3.1.08}
\int_K p_{M,\epsilon}(\iota_M(y) - k\iota_M(x)k^*) \, d^*k
\propto
  \int_{A} (\hankel_{\nu} p_{A,\varepsilon})(s) \\ \times \left(\int_{K}\int_{K} \exp[-\imath \tr ((\iota_M(y) - k\iota_M(x)k^*)\tilde{k} \iota_M(s)\tilde{k}^*)] \, d^*k \, d^*\tilde{k} \right)
(\det s)^\nu \, \Delta_n^2(s) \, ds \,.
\end{multline}
By the invariance of the Haar measure under the translation $k\to \tilde{k} k$,
the inner double integral in \eqref{p3.1.08} factorizes into
\begin{equation}\label{p3.1.2}
\int_{K}\exp[\imath \tr k\iota_M(x)k^*\iota_M(s)]d^*k\int_{K}\exp[-\imath \tr \tilde{k}\iota_M(y)\tilde{k}^*\iota_M(s)]d^*\tilde{k},
\end{equation}
where both integrals are Berezin-Karpelevich integrals~\cite{BK,GW}. 
Hence, using Eq.~\eqref{Berezin-Karpelevich}, it follows that
\begin{multline}\label{p3.1.3}
 \Delta_n(y)\Delta_n(x)\int_{K}p_{M,\varepsilon}(\iota_M(y)-k\iota_M(x)k^*)d^*k
 \propto \int_{A} (\mathcal{H} p_{A,\varepsilon})(s) \\
\,\times\, \det \Big[ \frac{J_{\nu}(2\sqrt{x_bs_c})}{(x_bs_c)^{\nu/2}} \Big]_{b,c=1,\hdots,n} 
 \, \det \Big[ \frac{J_{\nu}(2\sqrt{y_bs_c})}{(y_bs_c)^{\nu/2}} \Big]_{b,c=1,\hdots,n} \, (\det s)^\nu \, ds \,.
\end{multline}
Since the multivariate Hankel transform $\hankel_{\nu} p_{A,\varepsilon}(s)=\prod_{j=1}^n (\hankel_\nu \omega)(s_j) e^{-\varepsilon s_j}$ factorizes by Eqs.~\eqref{S1-pol-der} and \eqref{S2-pol-der},
we can apply Andr\'eief's integration theorem~\cite{Andreief} to~obtain
\begin{multline}\label{p3.1.5}
 \Delta_n(y)\Delta_n(x)\int_{K}p_{M,\varepsilon}(\iota_M(y)-k\iota_M(x)k^*)d^*k\\
 \propto \det\left[\int_0^\infty (\hankel_{\nu} \omega)(s)e^{-\varepsilon s} \, 
\frac{J_{\nu}(2\sqrt{x_cs})}{(x_cs)^{\nu/2}}\frac{J_{\nu}(2\sqrt{y_bs})}{(y_bs)^{\nu/2}}s^\nu\,ds\right]_{b,c=1,\ldots, n} \,.
\end{multline}
Using \eqref{p3.1.5} for $n = 1$, we see that the integral inside the determinant in \eqref{p3.1.5} is proportional to the group integral of the expression $p_{M(1),\varepsilon}(\iota_{M(1)}(y_b)-k\iota_{M(1)}(x_c)k^*)$ over the set $K(1)$. 

The normalizing constant in \eqref{group-int1} results from the normalizing constants in the inverse Fourier transform and the Berezin-Karpelevich integral, and is hence independent of $p_{M,\varepsilon}$. Instead of bookkeeping all constants, it can be determined 
by choosing $p_M(y)\propto\exp[-\tr (y^*y)/2]$ to be Gaussian,
where the group integrals in Eq.~\eqref{group-int1} may be evaluated explicitly
using \eqref{Berezin-Karpelevich}.
This completes the proof of Eq.~\eqref{group-int1} when $p_M$ is replaced with $p_{M,\varepsilon}$.

Finally, for $y$ and $x$ as in \eqref{p3.1.03}, we may deduce Eq.~\eqref{group-int1} by letting $m \to \infty$ in the corresponding result for $p_{M,\varepsilon_m}$. Then the integral of $p_{M,\varepsilon}$ converges to that of $p_{M}$ by \eqref{p3.1.03}, while the integral of $p_{M(1),\varepsilon}$ converges to that of $p_{M(1)}$ due to the continuity of $p_{M(1)}(s)$ for $n > 1$. (For $n = 1$, Eq.~\eqref{group-int1} is trivial.)

(1) This proof is almost the same. We must only replace $\iota_M$ with the identity,
use the Harish-Chandra-Itzykson-Zuber integral \eqref{Harish-Chan} instead of the Berezin-Karpelevich integral \eqref{Berezin-Karpelevich} and note that the integral over $K_2(1) = {\rm U}(1)$ reduces to a constant by commutativity.

(2) This proof goes along the same ideas as for part (3).
First of all, we argue that it is  sufficient to prove \eqref{group-int3}
for $p_{G,\varepsilon} := p_G \circledast q_\varepsilon$ instead of $p_G$,
where $q_\varepsilon$ is the multivariate `log-normal' density on $G$
as in Example \ref{ex:PolyaEns}\,(a).
Then we insert the spherical inversion formula
\cite[Thm. IV.7.5]{Helgason3}%
\begin{equation}
p_{G,\varepsilon}(g) \propto \int_{\myreal^n} (\mys p_{G,\varepsilon})(n \eins+\imath s) \, \varphi(gg^*,-n \eins-\imath s) \, |\Delta_n(\imath s)|^2 \, ds
\end{equation}
with $\eins := (1,\hdots,1) \in \myreal^n$
on the left side in Eq. \eqref{group-int3}
and interchange the group integral with the integral over $s$. 
The spherical function
\begin{equation}
\varphi(gg^*,s) := \int_{{\rm{U}}(n)}  \prod_{j=1}^n \det (\Pi_{j,n} kgg^*k^* \Pi_{j,n}^*)^{s_j-s_{j+1}-1} d^*k
\end{equation}
(where $s_{n+1} := - \tfrac{n+1}{2}$)
satisfies the functional equation \cite[Prop. IV.2.2]{Helgason3}
\begin{equation}\label{p3.3.1}
 \int_{{\rm{U}}(n)} \varphi(x^{-1/2}kyk^*x^{-1/2},s)d^*k=\varphi(x^{-1},s)\varphi(y,s).
\end{equation}
Then we use the explicit representation~\eqref{Gelfand-Naimark} of the spherical function  due to Gelfand and Na\u{\i}mark~\cite{GelNai}. The Vandermonde determinants cancel out and, since the spherical transform $\mathcal{S} p_{G,\varepsilon}$ is of the form~\eqref{S3-pol-der}, we can again apply Andr\'eief's integration theorem~\cite{Andreief}. Setting $n=1$, the entries of the resulting determinant can be recognized as inverse Mellin transforms, yielding Eq.~\eqref{group-int3}.
\qed

\begin{remark}\label{rm:1}
The identities in Thm.~\ref{thm:group-pol} do not rely on the positivity of the weights, as follows from the preceding proofs. Thus,
they are valid for signed densities as well.
\end{remark}

\subsection{Proof of Theorems~\ref{thm:rel-Polya} and~\ref{thm:rel-Polya.b}}\label{sec:thmproof}

For $n = 1$, the claims are trivial, since both the non-negativity of the density \eqref{jpdf-pol-ens} and the P\'olya frequency property reduce to the non-negativity of the function $\omega$. Hence, we~assume that $n > 1$.

\begin{proof}[Proof of Theorem~\ref{thm:rel-Polya}]\

(1) Let $\omega\in L^{1}_{H_2}(\mathbb{R})$ define
 a P\'olya ensemble on $H_2$, and without loss of generality suppose $\omega$ to be normalized, i.e. $\int_{-\infty}^\infty\omega(x')dx'=1$. Then we know that
\begin{equation}\label{proof4.1.1}
p_{H_2(1)}(x)=\omega(x)\geq0\quad{\rm and}\quad p_{H_2}(y)\geq0
\end{equation}
for all $x\in\mathbb{R}$ and $y\in H_2$.  Due to the group integral~\eqref{group-int4}, we also have
\begin{multline}\label{proof4.1.2}
 \Delta_n(x)\Delta_n(y)\det\left[\omega(y_b-x_c)\right]_{b,c=1,\ldots,n} \\ 
 = n! \, C_{H_2} \, \Delta_n^2(y)\Delta_n^2(x) \int_{K_2}p_{H_2}(y-kxk^*) \, d^*k \geq 0
\end{multline}
for almost all $y,x\in D$. By continuity, the non-negativity extends to all $y,x \in D$.
Combining these two non-negativity properties for $N=1$ and $N=n$ with the integrability of $\omega$,
we know from Lemma~\ref{lemma:PFF} that $\omega$ is a P\'olya frequency function of order $n$.

Conversely, let $\omega\in L^{1}_{H_2}(\mathbb{R})$ be a P\'olya frequency function of order $n$ with $\omega\neq0$. Then we know, compare e.g.\@ Theorem~2 in Ref.\cite{Karlin:1957}, that
\begin{equation}\label{proof4.1.3}
\Delta_n(x)\det\left[(-\partial_{x_c})^{b-1}\omega(x_c)\right]_{b,c=1,\ldots,n}\geq0
\end{equation}
for all $x\in D$. We still have to check that the normalizing constant $C_n[\omega]$ does not vanish. But this follows from Eq.~\eqref{S1-pol-der} and the observation that $(\fourier\omega)(0) > 0$ because $\omega$ is non-negative and does not vanish identically; see also Remark \ref{rm:3}.
Thus, $\omega$ gives rise to a P\'olya ensemble on $H_2$.

(2) We pursue the same ideas as in the proof of part (1). Consider a P\'olya en\-semble $p_{G}$ on $G$ associated with the normalized weight $\omega\in L_{G}^1(\mathbb{R}_+)$. The counter\-parts of Eqs.~\eqref{proof4.1.1} and \eqref{proof4.1.2} are
\begin{equation}\label{proof4.2.1}
p_{G(1)}(x)=\omega(x)\geq0\quad{\rm and}\quad p_{G}(g)\geq0
\end{equation}
for all $x\in\mathbb{R}_+$ and $g\in G$ and
\begin{multline}\label{proof4.2.2}
 \Delta_n(x)\Delta_n(y)\det\left[\omega\left(\frac{y_b}{x_c}\right)\right]_{b,c=1,\ldots,n} \\ = n! \, C_G \left(\prod_{j=1}^{n} \mellin \omega(j) \right) \frac{\Delta_n^2(y)\Delta_n^2(x)}{\det (x^{n-1})}\int_{\rm U(n)} p_{G}(x^{-1/2}ky^{1/2}) \, d^*k \geq 0
\end{multline}
for almost all $y,x\in A$. 
Again by continuity, the non-negativity extends to all $x,y \in A$. Setting $\tilde\omega(x)=\omega(e^{-x})e^{-x}$, 
we find that $\tilde\omega\ne 0$ is integrable, non-negative and satisfies
\begin{multline}\label{proof4.2.3}
\Delta_n(x) \Delta_n(y) \det\Big[\tilde\omega\left(y_b-x_c\right)\Big]_{b,c=1,\ldots,n} \\[+2pt] = 
\Delta_n(x) \Delta_n(y) \det\Big[\omega\left(e^{x_c-y_b}\right)e^{x_c-y_b}\Big]_{b,c=1,\ldots,n}\geq0
\end{multline}
for all $x,y\in D$,
since the products $\Delta_n(x) \Delta_n(y)$ and $\Delta_n(e^{-x}) \Delta_n(e^{-y})$
have the same sign.
Therefore, $\tilde\omega(x)$ is a P\'olya frequency function of order $n$ by Lemma~\ref{lemma:PFF}.

Conversely, when we start from a P\'olya frequency function $\tilde\omega(x)=\omega(e^{-x})e^{-x}$ of order $n$ with $\omega\in L_{G}^1(\mathbb{R}_+)$ and $\omega\neq0$, it follows similarly as in Eq.~\eqref{proof4.1.3} that
\begin{multline}\label{proof4.2.4}
\Delta_n(y)\det y\det\left[(-y_c\partial_{y_c})^{b-1}\omega(y_c)\right]_{b,c=1,\ldots,n} \\[+2pt]
= \Delta_n(-e^{-x})\det\left[(-\partial_{x_c})^{b-1}\tilde\omega(x_c)\right]_{b,c=1,\ldots,n} \geq 0
\end{multline}
for all $y=e^{-x}\in A$, since $\Delta_n(-e^{-x})$ has the same sign as $\Delta_n(x)$.
This time the normalizing constant $C_n[\omega]$ is given by Eq.~\eqref{S3-pol-der}; see Remark \ref{rm:3}. The positivity and integrability conditions on $\omega$ immediately tell~us that $\mathcal{M}\omega(j)>0$ for all $j=1,\ldots,n$, so that $C_n[\omega]\neq0$. 
This implies that $\omega$ gives rise to a P\'olya ensemble on~$G$.
\end{proof}

\begin{proof}[Proof of Theorem~\ref{thm:rel-Polya.b}]\
Let $\tilde\omega\in L^{1}_{H_2}(\mathbb{R})$ with support contained in $[0,\infty[$, and let $\omega$ be defined as in Eq.~\eqref{polya-rel.1}. The weight $\tilde\omega$ is an $L^{1}$-function on $\mathbb{R}_+$ and for any $x > 0$, the function $\tilde\omega(y)\exp[-x/y]x^\nu/y^{\nu+1}$ as well as its derivatives $\partial_x^l [ \tilde\omega(y)\exp[-x/y]x^\nu/y^{\nu+1} ]$, $l=1,\ldots,2n-2$, are integrable in $y$. Thus, it is easy to see that the derivatives of $\omega$ up to order $2n-2$ exist. Moreover, $\omega$ is positive  because the integrand is positive.

In the next step we check that the integrability conditions in the definition of the set $L^{1}_M(\mathbb{R}_+)$, see Def.~\ref{def:PolyaEns}\,(c). For this purpose we will repeatedly use the identity
\begin{equation}\label{p4.2.1}
 \left(x^\nu\frac{\partial }{\partial x}x^{1-\nu}\frac{\partial }{\partial x}\right)^l\omega(x)=\frac{1}{\Gamma[\nu+1]}\int_0^\infty \left(\frac{x}{y}\right)^\nu\exp\left[-\frac{x}{y}\right]\bigg[\left(-\frac{\partial }{\partial y}\right)^l\tilde\omega(y)\bigg]\frac{dy}{y}
\end{equation}
for $l=0,\ldots,n-1$, which can be proven via  integration by parts. We also recall the operator identity $x^\nu\partial_x x^{1-\nu}\partial_x=\partial_x x^{\nu+1}\partial_x x^{-\nu}$ and that the boundary terms vanish because of the definition of the set $ L^{1}_{H_2}(\mathbb{R})$, see Def.~\ref{def:PolyaEns}\,(a), i.e. $\lim_{y\to0}\tilde\omega^{(l)}(y)=0$ for all $l=0,\ldots,n-2$. Using~\eqref{p4.2.1}, we have
\begin{align}\label{p4.2.2}
&\int_0^\infty \left|x^{\kappa-1}\left(x^\nu\frac{\partial }{\partial x}x^{1-\nu}\frac{\partial }{\partial x}\right)^l\omega(x)\right|dx \nonumber\\
\leq&\frac{1}{\Gamma[\nu+1]}\int_0^\infty\int_0^\infty \left|x^{\kappa-1}\left(\frac{x}{y}\right)^\nu\exp\left[-\frac{x}{y}\right]\bigg[\left(-\frac{\partial }{\partial y}\right)^l\tilde\omega(y)\bigg]\right|\frac{dy}{y}\,dx \nonumber\\
=&\frac{\Gamma[\nu+\kappa]}{\Gamma[\nu+1]}\int_0^\infty \left|y^{\kappa-1}\bigg[\left(-\frac{\partial }{\partial y}\right)^l\tilde\omega(y)\bigg]\right|dy < \infty
\end{align}
for any $\kappa\in[1,n]$ and $l=0,\ldots,n-1$.

Now we check whether the boundary conditions in the definition of $L^{1}_{M}(\mathbb{R}_+)$ are satisfied. 
For this purpose we let $\tilde\omega^{(l)}$ denote the $l$-th derivative of $\tilde\omega$. 
We choose an auxiliary parameter $\gamma\in{]2/3,1[}$. Then, for $x > 0$ small enough, 
using Eq. \eqref{p4.2.1} in the first step, we have the estimate
\begin{align}
 &\left|x^{\nu+1}\frac{\partial}{\partial x}\frac{1}{x^\nu}\left(\frac{\partial}{\partial x}x^{\nu+1}\frac{\partial}{\partial x}\frac{1}{x^\nu}\right)^l\omega(x)\right|\nonumber\\
 =&\frac{1}{\Gamma[\nu+1]}\left|\int_0^\infty \left(\frac{x}{y}\right)^{\nu+1}\exp\left[-\frac{x}{y}\right]\tilde\omega^{(l)}(y)\frac{dy}{y}\right|\nonumber\\
 =&\frac{1}{\Gamma[\nu+1]}\left|\int_0^\infty y^{\nu}\exp\left[-y\right]\tilde\omega^{(l)}\left(\frac{x}{y}\right)dy\right|\nonumber\\
 \leq&\frac{1}{\Gamma[\nu+1]}\biggl(\left|\int_0^{x^\gamma} y^{\nu+2}\exp\left[-y\right]\tilde\omega^{(l)}\left(\frac{x}{y}\right)\frac{dy}{y^2}\right|+\left|\int_{x^\gamma}^\infty y^{\nu}\exp\left[-y\right]\tilde\omega^{(l)}\left(\frac{x}{y}\right)dy\right|\biggl)\nonumber\\
 \leq&\frac{x^{\gamma(\nu+2)}e^{-x^\gamma}}{\Gamma[\nu+1]}\left|\int_0^{x^\gamma} \tilde\omega^{(l)}\left(\frac{x}{y}\right)\frac{dy}{y^2}\right|+\max_{\lambda\in[0,x^{1-\gamma}]}|\tilde\omega^{(l)}(\lambda)|\left|\int_{x^\gamma}^\infty \frac{y^{\nu}\exp\left[-y\right]}{\Gamma[\nu+1]}dy\right|\nonumber\\
 \leq&\frac{x^{\gamma(\nu+2)-1}}{\Gamma[\nu+1]}\int_0^\infty |\tilde\omega^{(l)}(y)|dy+\max_{\lambda\in[0,x^{1-\gamma}]}|\tilde\omega^{(l)}(\lambda)|.\label{4p.2.3}
\end{align}
Now let $x \to 0$. The first term vanishes because $|\tilde\omega^{(l)}|$ is integrable for any $l=0,\ldots,n-1$ and $\gamma>2/3\geq1/(\nu+2)$ for any $\nu\geq-1/2$. The second term vanishes because  $\lim_{\lambda\to0}\tilde\omega^{(l)}(\lambda)=0$ for any $l=0,\ldots,n-2$. Thus we have
\begin{equation}\label{4p.2.4}
 \lim_{x\to0}\left|x^{\nu+1}\frac{\partial}{\partial x}\frac{1}{x^\nu}\left(\frac{\partial}{\partial x}x^{\nu+1}\frac{\partial}{\partial x}\frac{1}{x^\nu}\right)^l\omega(x)\right|=0
\end{equation}
for all $l=0,\ldots,n-2$.

At last, we show that the density associated with $\omega$ is indeed non-negative.
To this end, we establish a relation between the densities on $A$ 
corresponding to $\omega$~and~$\tilde\omega$. For $x \in A$ with $x_1,\hdots,x_n$ pairwise different, we have 
\begin{align}\label{4p.2.5}
0 &\le \int_A \left(\int_{K_n}\exp[-\tr xk a^{-1}k^*]d^*k\right) \frac{\det x^\nu}{\det a^{\nu+n}}\Delta_n(a)\det\left[\left(-\frac{\partial }{\partial a_c}\right)^{b-1}\tilde\omega(a_c)\right]_{b,c=1,\ldots,n}da \nonumber\\
&=\frac{\prod_{j=0}^{n-1}j!}{\Delta_n(x)}\int_A \det[e^{-x_b/a_c}]_{b,c=1,\ldots,n} \left(\frac{\det x}{\det a}\right)^\nu\det\left[\left(-\frac{\partial }{\partial a_c}\right)^{b-1}\tilde\omega(a_c)\right]_{b,c=1,\ldots,n}\frac{da}{\det a} \nonumber\\
&=\frac{\prod_{j=1}^{n}j!}{\Delta_n(x)}\det\left[\int_0^\infty \left(\frac{x_c}{y}\right)^\nu\exp\left[-\frac{x_c}{y}\right]\left[\left(-\frac{\partial }{\partial y}\right)^{b-1}\tilde\omega(y)\right]\frac{dy}{y}\right]_{b,c=1,\ldots,n} \nonumber\\
&=\frac{\prod_{j=1}^{n}(j!\Gamma[\nu+1])}{\Delta_n(x)}\det\left[\left(x_c^\nu\frac{\partial }{\partial x_c}x_c^{1-\nu}\frac{\partial }{\partial x_c}\right)^{b-1}\omega(x_c)\right]_{b,c=1,\ldots,n}.
\end{align}
The initial inequality holds because the density associated with $\tilde\omega$ is non-negative and the other factors are positive.
In the next steps, we use the Harish-Chandra-Itzykson-Zuber integral~\eqref{Harish-Chan} in combination with the relation $\Delta_n(-a^{-1}) = \linebreak \det (a^{1-n}) \, \Delta_n(a)$, which shifts the exponent of the determinant $\det a$,
the Andr\'eief identity~\cite{Andreief}, and Eq.~\eqref{p4.2.1}, 
respectively.

Apart from the normalizing constant, the last line in \eqref{4p.2.5} is exactly the joint probability density 
of the squared singular values of a  P\'olya ensemble on $M_\nu$ divided by $|\Delta_n(x)|^2$. 
Finally, the relation $\hankel_\nu \omega(0) = \int_0^\infty \tilde\omega(x) \, dx > 0$ shows
that the corresponding normalizing constant $C_n[\omega]$ is indeed positive; see also Rem.~\ref{rm:3}.
This completes the~proof of Theorem~\ref{thm:rel-Polya.b}.
\end{proof}

\section{Conclusions and Outlook}
\label{sec:conclusio}

In the present work we have extended the notion of a \emph{polynomial ensemble of derivative type} to the classes of complex rectangular matrices, Hermitian anti-symmetric matrices and Hermitian anti-self-dual matrices. We have shown that these ensembles have nice closure properties with respect to additive convolution
(Theorem \ref{thm:pol-conv} and Corollary \ref{cor:pol-conv}),
thereby extending previous results for complex square matrices \cite{KK:2016a} and complex Hermitian matrices \cite{KR:2016}. In fact, by using the appropriate multivariate transforms from harmonic analysis, all these classes of matrices may be handled in a unified way.

Furthermore, for each of these classes of matrices, we have addressed the question which weight functions give rise to polynomial ensembles of derivative type. We have shown 
in Theorems \ref{thm:rel-Polya} and \ref{thm:rel-Polya.b} that these weight functions are closely related to P\'olya frequency functions \cite{Polya:1913,Polya:1915,Schoen:1951}. For this reason, we propose the shorter name \emph{P\'olya ensembles} for the resulting ensembles. 

Another main result are the group integrals in Theorem \ref{thm:group-pol} 
which generalize known group integrals~\cite{GR:1989,O:2004,HO:2006} 
and which seem to be of independent interest. 
Moreover, on the practical side, the relation to P\'olya frequency functions yields a~multitude of examples of P\'olya ensembles and associated group integral identities. Some of them are highly non-trivial and go beyond the classical results in random matrix theory.

As is evident from our proofs, both the convolution theorems and the group integral identities are intimately related to the fact that the relevant multivariate transform \emph{factorizes}.
Furthermore, we would like to emphasize that these results do not really require the positivity of the matrix densities. They also hold for signed matrix densities as long as the integrability and differentiability conditions on the underlying weight function are satisfied. The question is whether these results can be extended even beyond these conditions. A natural candidate are the signed measures such that the relevant multivariate transforms factorizes, but a closer description seems to require the language of distributions.

Interestingly, the cases of real anti-symmetric matrices of odd dimension and of Hermitian anti-self-dual matrices lead to exactly the same results. From a group theoretical perspective, this has to be expected since the roots are the same apart from their length. Thus the random matrix ensembles show the same eigenvalue statistics, but the eigenvector statistics should be different.

Finally, it is crucial for our results that the group integrals involved in the multivariate transforms admit explicit expressions and that the ensembles correspond to the Dyson index $\beta=2$. How our results can be extended to other symmetry classes of matrices, e.g. real-symmetric matrices or Hermitian self-dual matrices, is still an~open problem. 

\medskip

% ****************************************************************************************************

\section*{Acknowledgements}

We want to thank Gernot Akemann, Friedrich G\"otze and Arno Kuijlaars
for fruitful discussions on this topic. Moreover we acknowledge support 
by {\it CRC 701 ``Spectral Structures and Topological Methods in Mathematics''} 
as well as by grant {\it AK35/2-1 ``Products of Random Matrices''}, 
both funded by {\it Deutsche Forschungsgemeinschaft (DFG)}. 
The work of Yanik-Pascal F\"orster was formerly supported by \emph{Studien\-stiftung des deutschen Volkes}.

\medskip

\makeatletter
\@setaddresses
\let\addresses\empty
\makeatother

\end{document}